\documentclass{amsart}

\usepackage[latin1]{inputenc}
\usepackage{amssymb}
\usepackage{amsfonts}
\usepackage{amsmath}
\usepackage{url}
\usepackage{latexsym,longtable,amscd,booktabs,array}
\usepackage{color}

\theoremstyle{plain}
\newtheorem{theorem}{Theorem}
\newtheorem{lemma}{Lemma}

\newtheorem{corollary}{Corollary}

\newtheorem{definition}{Definition}
\newtheorem{proposition}{Proposition}

\newtheorem{remark}{Remark}

\newtheorem{example}{Example}

\newcommand{\C}{\mathbb{C}}

\newcommand{\Q}{\mathbb{Q}}

\newcommand{\Z}{\mathbb{Z}}

\newcommand{\Qbar}{\overline{\Q}}

\newcommand{\aut}[1]{{\rm Aut}( {#1} ) }
\newcommand{\gal}[1]{{\rm Gal}( {#1} ) }

\newcommand{\ord}[1]{{\rm ord}\,#1}

\newcommand{\calD}{{\mathcal{D}}}

\newcommand{\Qisog }{\stackrel{\Q}{\sim}}
\newcommand{\jac}[1]{{\rm Jac}( {#1} ) }

\def\QED{\hfill $\Box$}

\newcommand{\twoheadlongrightarrow}{\relbar\joinrel\twoheadrightarrow}
\newcolumntype{L}[1]{>{\rightskip=0pt plus1fil}p{#1\textwidth}<{}}
\newcolumntype{Z}{L{0.14}@{:\,\,\,\,}L{0.78}}

\begin{document}

\title{Non-hyperelliptic modular curves of genus $3$}
\author{Enrique Gonz{\'a}lez-Jim{\'e}nez}
\address{Universidad Aut{\'o}noma de Madrid, Departamento de Matem{\'a}ticas and Instituto de Ciencias Matemáticas (CSIC-UAM-UC3M-UCM), Madrid, Spain}
\email{enrique.gonzalez.jimenez@uam.es}

\author{Roger Oyono}
\address{\'Equipe GAATI, Universit\'e de la Polyn\'esie Fran\c{c}aise, 
BP 6570 - 98702 Faa'a - Tahiti - French Polynesia}
\email{roger.oyono@upf.pf}
\thanks{The first author was supported in part by grant MTM 2006-10548 and CCG06-UAM/ESP-0477.}

\keywords{Modular curves, Non-hyperelliptic curves of genus 3}
\subjclass[2000]{Primary 14G35, 14H45; Secondary 11F11, 11G10}
\date{6 December 2009}

\begin{abstract}
A curve $C$ defined over $\Q$ is modular of level $N$  if there exists a non-constant morphism from $X_1(N)$ onto $C$ defined over $\Q$ for some positive integer $N$. We provide a sufficient and necessary condition for the existence of a modular non-hyperelliptic curve $C$ of genus $3$ and level $N$ such that $\jac{C}$ is $\Q$-isogenous to a given three dimensional $\Q$-quotient  of $J_1 (N)$. Using this criterion, we present an algorithm to compute explicitly equations for modular non-hyperelliptic curves of genus $3$.  Let $C$ be a modular curve of level $N$, we say that $C$ is new if the corresponding morphism between $J_1(N)$ and $\jac{C}$ factors through the new part of $J_1(N)$. 
We compute equations of  $44$ non-hyperelliptic new modular curves of genus $3$, that we conjecture to be the complete list of this kind of curves. Furthermore, we describe some aspects of non-new modular curves and we present some examples that show the ambiguity of the non-new modular case.
\end{abstract}

\maketitle

 

\section{Introduction}\label{intro}
Let $N$ be a positive integer and $X_1(N)$ (resp. $ X_0(N)$) be the classical modular curve corresponding to the modular group $\Gamma_1(N)$ (resp. $\Gamma_0(N)$). Many papers have already been devoted to the problem of  finding $\Q$-rational models for these modular curves and their quotients \cite{MShimura95,J_Gon, murabayashi92, MR1472817, hasegawa:hashimoto96,MR1692024}. In this work we are interested in modular curves defined over $\Q$ which are dominated over $\Q$ by $X_1(N)$.

In \cite{GonGon, BakGonPoo} the concept of new modular curve is introduced. These are curves dominated by $X_1(N)$ such that the corresponding morphism on their jacobians factors through the new part of the jacobian of $X_1(N)$. For the genus $1$ case, the concept of new modular curve and modular curve are equivalent. Shimura \cite{Shimura71} proved that any elliptic curve with complex multiplication is modular, and provided in this way the first infinite set of new modular curves of genus $1$. Furthermore, in a series of papers, Wiles et al. \cite{Wiles,TWiles,MR1839918} proved that every elliptic curve defined over $\Q$ is modular and thus new modular. Conversely, all new modular curves of genus $1$ are elliptic curves defined over $\Q$.  In contrast to the new modular elliptic curve case, for a fixed genus $g\geq 2$ the set of new modular curves of genus $g$ (up to $\Q$-isomorphism) is finite and computable \cite{BakGonPoo}. In the genus $2$ case,   \cite{BakGonPoo,GonGon} provide a complete list of new modular curves. More precisely, in their proof \cite{BakGonPoo} of computability of new modular curves of fixed genus $g>1$, the authors develop a deterministic method that provides a finite (but enormous) list containing, amongst others, all the new modular curves of genus $g$. The large amount of curves appearing in this  list makes the computation of all new modular curves of genus $g>2$ impossible nowadays. For example, for $g=3,4,5,6$  there are  respectively $10^{105}, 10^{239},10^{455},10^{844}$ possibilities.
Moreover,  \cite{BakGonPoo} provides a sufficient and necessary condition to verify if a new modular abelian variety is $\Q$-isogenous to the jacobian of a modular hyperelliptic curve. That is, for each level $N$, they provide a method to compute  all the new modular hyperelliptic curves defined over $\Q$ of level $N$. For the non-hyperelliptic new modular case, they provide a necessary, but not sufficient condition based on the canonical embedding (cf. Remark \ref{obs1}). 

The aim of this paper is to study the simplest case of non-hyperelliptic new modular curves, i.e.\, the case of genus $3$ (smooth plane quartics). We first provide a necessary and sufficient condition for a non-hyperelliptic curve to be modular of level $N$ with the additional requirement that its holomorphic differentials correspond to the holomorphic differentials of a given modular abelian $3$-fold defined over $\Q$. We then restrict our attention to the computation of all non-hyperelliptic new modular curves of genus $3$ up to a fixed level (see Appendix). 

This paper is organized as follows: In Sections 2 and 3 we review the necessary technical background about modular curves and non-hyperelliptic genus $3$ curves respectively. In Section 4 we present a method that allows us to recognize if a modular abelian $3$-fold corresponds to the jacobian of a non-hyperelliptic modular curve of genus $3$.  We apply this method to compute all the new modular non-hyperelliptic curves of genus $3$  up to certain levels. 
 In Section 5 we present some examples that show the ambiguity of the non-new modular case. We conclude this paper with an appendix that gives equations of  $44$ non-hyperelliptic new modular curves of genus $3$, and we expect these to be the complete list of this kind of curves. 

{\it Remark:} 
A different approach to the one studied in this paper is the one researched at \cite{oyono}. There the author developes an algorithm to recognize if a modular abelian $3$-fold is the jacobian of a non-hyperelliptic curve of genus $3$. Note that in our problem, if a curve is modular then its jacobian is modular. Nevertheless, the converse is not true in general.

{\it Notation:} 
All curves and varieties in this paper are smooth and projective, and all the fields will be of characteristic zero. If $X$ is a variety over a field $K$, $\Omega^1=\Omega^1_{X/K}$ denotes the sheaf of holomorphic $1$-forms. If $A$ and $B$ are two abelian varieties defined over a field $K$, the notation $A\stackrel{K}{\sim}B$ means that $A$ and $B$ are $K$-isogenous. Let $k,N\in \mathbb{N}$, we denote by $S_k(N)$ the vector space of cuspidal forms of weight $k$ for the modular subgroup $\Gamma_1(N)$. Throughout the paper all the modular curves and abelian varieties are defined over $\Q$ and we will use the labelling of modular forms and abelian varieties as it was introduced in \cite[Appendix]{BakGonPoo}. For the sake of completeness, we remind this labelling at the Appendix. Along the paper we will use the canonical identification between the spaces $H^0(C,\Omega^1)$ and $H^0(\jac{C},\Omega^1)$ (cf. \cite[\S 2.9, Prop. 8]{shimura-taniyama}).


\section{Modular curves}\label{section2}

This section is dedicated to the basic notions about modularity that will be used in the rest of the paper. \cite[\S 3.1]{BakGonPoo}  is a good reference where all the necessary background is included.

\begin{definition}
An abelian variety $A$ over $\Q$ is said to be {\it modular of level $N$} if there exists a surjective $\Q$-morphism $\nu:J_1(N)\twoheadlongrightarrow A$. In that case, we say that A is {\it new (of level $N$)} if $\nu$ factorizes through $J_1^{\rm new}(N)$ over $\Q$. 
\end{definition}

Thanks to the knowledge of the decomposition of $J_1(N)$ over $\Q$, we have that if $A$ is a modular abelian variety then there exist $k$ normalized eigenforms $ f_1,\dots ,f_k$ in $S_2(N,\varepsilon_k)$ (where $S_2(N,\varepsilon)$ denotes the vector space of cuspidal forms of weight $2$, level $N$ and Nebentypus the Dirichlet character $\varepsilon$) such that 
\begin{equation}\label{eq_A}
A\stackrel{\Q}{\sim}A_{f_1}^{n_1}\times\dots\times A_{f_k}^{n_k}\,,
\end{equation}
where $A_{f_i}$ is the abelian variety defined over $\Q$ attached to $f_i$ by Shimura. These modular abelian varieties are very special. For instance, if $f$ is a newform with $q$-expansion  $f(q)=q+\sum_{n\ge 2}{a_n q^n}$ (here $q=e^{2\pi iz}$), then $A_f$ is $\Q$-simple and $K_f=\Q(\{a_n\})$ is a number field of degree equal to the dimension of $A_f$. 

For a modular abelian variety $\nu:J_1(N)\twoheadlongrightarrow A$ we define $S_2(\nu,A)$ to be the subspace of $H^0(J_1(N),\Omega^1)$ determined by the equality 
$$
\nu^{*} H^0(A,\Omega^1)=S_2(\nu,A)\frac{dq}{q} \,.
$$
If $A$ is new, it admits a unique quotient map from $J_1(N)$, and hence, the space $S_2(\nu,A)$ is canonically attached to $A$ and it is denoted simply by $S_2(A)$. Note that if $A=A_f$ then $S_2(A_f)=\langle {}^\sigma f(q) : \sigma\in \gal{\Qbar/\Q}\rangle$. On the other hand, if $A$ is non-new of level $N$, it may be presented in many ways as a quotient of $J_1(N)$, and thus this space depends on the quotient map $\nu$. Along the rest of the paper, whenever we refer to a modular abelian variety, we will assume implicitly that we are given a fixed map $\nu:J_1(N)\twoheadlongrightarrow A$, and thus we will write shortly $S_2(A)$ instead of $S_2(\nu,A)$.

\begin{definition}
A non-singular curve $C$ defined over $\Q$ is said to be {\it modular of level $N$} if there exists a non-constant $\Q$-morphism $\pi :X_1(N)\twoheadlongrightarrow C$. The modular curve $C$ is then said to be {\it new of level $N$} if its jacobian $\jac{C}$ is new of level $N.$
\end{definition}

Let $\pi :X_1(N)\twoheadlongrightarrow C$ be a modular curve, then we will use $S_2(C)$ to denote $S_2(\pi_*,\jac{C})$. Here the dominant $\Q$-morphism is uniquely determined by $\pi$, that is, the dominant morphism is $\pi_*:J_1(N)\twoheadlongrightarrow \jac{C}$.

As a first step to understand the structure of new modular curves, the authors of \cite{GonGon} showed that the set of new modular curves of genus $2$ over $\Q$ is finite, and that there are exactly 149 such curves whose jacobian  is $\Q$-simple. In \cite{BakGonPoo} the case of genus $2$ is completed, showing in particular that there are exactly $213$ new modular curves of genus $2$. Furthermore,  \cite{BakGonPoo} generalized the above approach for new modular curves with fixed genus $g\geq 2$:
 
\begin{theorem}\cite{BakGonPoo}
For each integer $g\geq 2$, the set of new modular curves over $\Q$ of genus $g$ is finite and computable.
\end{theorem}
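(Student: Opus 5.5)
The plan is to bound the level $N$ of a new modular curve $C$ of genus $g\ge 2$ in terms of $g$ alone. Once $N$ is bounded, finiteness and computability follow from the fact that for each level only finitely many curves can occur, and these can be listed explicitly. The starting point is Riemann--Hurwitz applied to $\pi:X_1(N)\to C$. It gives no direct bound, because $g(X_1(N))$ grows with $N$. Instead I would use the cusp at infinity together with the $q$-expansions. By newness, $S_2(C)$ is spanned by Galois conjugates of newforms of level $N$. In particular, $S_2(C)$ is a $g$-dimensional space of cusp forms with rational $q$-expansions, coming from the newforms $f_i$ in (\ref{eq_A}).

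Step 1 bounds the degree of $\pi$. The curve $C$ has a point $P=\pi(\infty)$, and we choose a basis $h_1,\dots,h_g$ of $S_2(C)$ in echelon form, so that $h_i=q^{i}+O(q^{g+1})$ after a change of basis (Weierstrass gap structure at $P$). One can use the orders of vanishing at $P$. Consider the function $x=\omega_{g-1}/\omega_g$ (or a ratio of two differentials whose vanishing orders at $P$ differ by at most $g$, since the Weierstrass gaps lie in $[1,2g-1]$). This function has a pole of order at most $2g$ at $P$. Its pullback $x\circ\pi$ is a modular function on $X_1(N)$ with poles only above $P$. At the cusp $\infty$ of $X_1(N)$, which is unramified over $P$ because the differential $dq/q\cdot h_1$ has $q$-order $1$, the pole order is bounded by $2g$. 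The key input, which I expect to be the main obstacle, is to show that all poles of $x\circ\pi$ are controlled. Newness is what gives this. The Atkin--Lehner/Galois action on newforms moves cusps around, and using that $\pi^*$ of a divisor at the rational point $P$ must be Galois- and $\langle d\rangle$-stable gives a bound $\deg \pi\le c(g)$.

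Step 2 bounds $N$. With $\deg\pi\le c(g)$, the pullback of the $g$ differentials of $C$ yields $g$ newforms of level $N$ whose span is $\deg \pi$-controlled. The main technique I would use is Sturm-type or Ogg-type estimates. A rational function of degree $d$ on $X_1(N)$ gives a gonality bound, and Abramovich's lower bound $\gamma(X_1(N))\ge \frac{7}{800}[\mathrm{PSL}_2(\Z):\Gamma_1(N)]$ then forces $N\le N_0(g)$. Alternatively, one can reduce mod a small prime $p\nmid N$ and count $\mathbb{F}_{p^2}$-points: $C$ has at most $p^{2}+1+2gp$ of them, while $X_1(N)$ has many supersingular points. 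A degree-$d$ map then forces $\#X_1(N)(\mathbb{F}_{p^2})\le d\,\#C(\mathbb{F}_{p^2})$, which bounds $N$.

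Step 3 is computability. For each $N\le N_0(g)$ we enumerate the finitely many $\Q$-quotients $A$ of $J_1^{\rm new}(N)$ of dimension $g$ via (\ref{eq_A}). For each, we take the basis of $S_2(A)$ and search for curves using the canonical model. In the hyperelliptic case, $x=h_{g-1}/h_g$ and $y=q\,dx/dq/h_g$ must satisfy $y^2=$ a polynomial of degree $\le 2g+2$. In the non-hyperelliptic case, the $h_i$ must satisfy the quadric/cubic relations cutting out a canonical curve of genus $g$. These relations can be checked with finitely many $q$-expansion coefficients by a Sturm bound. This yields a finite, effectively computable list.
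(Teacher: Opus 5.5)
\textbf{There is a genuine gap in Step 1, and Steps 2 and 3 depend on it.} Riemann--Hurwitz gives $\deg\pi\le (g(X_1(N))-1)/(g-1)$. The gonality argument of your Step 2 gives $\deg\pi\ge \mathrm{gon}(X_1(N))/\mathrm{gon}(C)$, which grows linearly in the index of $\Gamma_1(N)$. So a bound $\deg\pi\le c(g)$ is equivalent to a bound on $N$: Step 1 contains the whole difficulty rather than reducing it.

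The mechanism you sketch cannot deliver this bound. Unramifiedness at $\infty$, which you correctly extract from the newform basis starting with $q$, controls only the pole of $x\circ\pi$ at $\infty$. The remaining poles sit at the other points of $\pi^{-1}(P)$ (other cusps or non-cuspidal points), and nothing is said about them. Galois- or $\langle d\rangle$-stability of $\pi^{*}(P)$ only forces that fibre to contain whole orbits. That bounds its size from below, never from above.

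There are also three smaller problems:
\begin{itemize}
\item $h_i=q^i+O(q^{g+1})$ requires $P$ not to be a Weierstrass point.
\item Supersingular points of $X_1(N)$, unlike those of $X_0(N)$, need not be $\mathbb{F}_{p^2}$-rational for small $p$, so the point-counting alternative in Step 2 fails as stated.
\item In Step 3 the canonical relations alone are not enough. You must also check that the differentials of the resulting curve pull back to exactly $S_2(A)$; this is condition (iii)(b) of Proposition~\ref{nonhyper3}, and Example~\ref{C120} shows false positives otherwise.
\end{itemize}

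\textbf{The missing idea is that you need not bound $\deg\pi$ or $N$ before obtaining finiteness.} The paper only cites this theorem; its introduction describes the proof in \cite{BakGonPoo} as producing a finite but enormous list of candidate curves. The argument I would expect to produce such a list runs in the opposite direction from yours:
\begin{itemize}
\item Since $\pi$ is unramified at $\infty$, $q$ is a uniformizer at $P=\pi(\infty)$.
\item A nonzero section of $\Omega_C^{\otimes d}$ vanishes at $P$ to order at most $d(2g-2)$. Hence a degree-$d$ form $F$ is a genuine relation among $h_1,\dots,h_g$ as soon as $F(h_1,\dots,h_g)=O(q^{d(2g-1)+1})$.
\item So the canonical (or hyperelliptic) model of $C$, and hence $C$ up to $\Q$-isomorphism, is determined by the first $M(g)$ coefficients of a basis of $S_2(C)$. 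Crucially, $M(g)$ does not depend on $N$.
\item Newness makes these coefficients those of Galois conjugates of normalized newforms. They are algebraic integers of degree at most $g$, all of whose conjugates are bounded by $\sigma_0(n)\sqrt{n}$ (Deligne).
\item So only finitely many initial segments, hence finitely many curves, can occur.
\end{itemize}
The level is then bounded only a posteriori, since the conductor of $\jac{C}$ is $N^g$, and computability reduces to testing each candidate. In short, what should replace your Step 1 is the Ramanujan--Petersson bound on coefficients, combined with a Sturm-type bound on $C$ itself, not a bound on the degree of the parametrization.
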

\section{Non-hyperelliptic curves of genus $3$}

Since parts of the theory concerning non-hyperelliptic genus $3$ curves are not easily available in modern publications, we include here the theory we use throughout the article.  Specifically, we present some facts about holomorphic differentials, canonical embedding and automorphism subgroups. Some useful references can be found in \cite{accola_70,brieskorn,farkas_kra,komiya_kuri}.

Let $C$ be a non-hyperelliptic curve of genus $3$ defined over a field $K$ and let $\left\{ \omega_1, \omega_2, \omega_3\right\}$ be a basis of the space $H^0(C,\Omega^1)$ of holomorphic differential forms  on $C.$ The canonical embedding of $C$ with respect to this basis is given by
\[
\begin{array}{cccl}
	\phi  : & C  & \longrightarrow & \mathbb{P}^{2}\\
	&  P& \longmapsto & \phi(P):=[\omega_1(P): \omega_2(P): \omega_3 (P)]\, ,
\end{array}
\]
where $\omega (P)=g(P)$ for any expression $\omega = gdt_P,$ with $g,t_P\in K(C)$ and $t_P$ a local parameter at $P.$ The image $\phi (C)$ of $C$ by such a  canonical embedding $\phi$ is a smooth plane quartic defined over $K$ and, conversely, any smooth plane quartic is the image by a canonical embedding of a genus $3$ non-hyperelliptic curve.

From now on, let $C$ be a smooth plane quartic defined over $K$ by an affine model $f(x,y)=0$. In these coordinates there is a canonical basis of $H^0 (C,\Omega^1)$ given by
\begin{equation}\label{eq_hol_afin}
\frac{x}{\frac{\partial f}{\partial y}(x,y)}\,dx\, , \, \frac{y}{\frac{\partial f}{\partial y}(x,y)}\,dx \, , \, \frac{1}{\frac{\partial f}{\partial y}(x,y)}\,dx\,.
\end{equation}

The next result will be useful in Section \ref{auto_new}.

\begin{proposition}\label{quotg1}
Let $C$ be a  non-hyperelliptic curve of genus $3$ and $G$ an abelian subgroup of $\aut{C}$ such that the genus of $C/G$ is $1$ . Then $G$ is cyclic of order $2,3$ or $4$.
\end{proposition}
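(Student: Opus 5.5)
We will use the Riemann--Hurwitz formula for the quotient map $C\to C/G$, together with the structure of ramification for abelian groups.

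First reduce to cyclic groups. If $H\le G$ is a nontrivial subgroup, then $C\to C/H\to C/G$, so $g(C/H)\ge 1$. Take $\sigma\in G$ of prime order $p$. Riemann--Hurwitz for $C\to C/\langle\sigma\rangle$ gives
\[
4=p\,(2g(C/\langle\sigma\rangle)-2)+(p-1)r,
\]
where $r$ is the number of fixed points. The genus of the quotient is $h\ge1$, and $h=3$ is impossible for $p\ge2$.

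If $h=1$, then $4=(p-1)r$, so $(p,r)\in\{(2,4),(3,2),(5,1)\}$. The case $p=5,\ r=1$ is excluded, since a $\Z/p$-cover of a genus $1$ curve cannot have exactly one branch point: the local monodromies must multiply to the identity in an abelian group. If $h=2$, then $p=2$ and $r=0$, an unramified double cover of a genus $2$ curve. For a non-hyperelliptic genus $3$ curve, every involution has $4$ fixed points; this follows from the plane-quartic model, where an involution is $[x:y:z]\mapsto[x:y:-z]$ with fixed line $z=0$ meeting $C$ in $4$ points. Hence $p\in\{2,3\}$.

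Next, bound $|G|$ via Riemann--Hurwitz for $C\to E=C/G$:
\[
4=\sum_{P\in E}\left(|G|-\frac{|G|}{e_P}\right).
\]
Each nontrivial branch contribution is at least $|G|/2$, so $|G|\le 8$. The branch data must also be realizable by an abelian group, i.e.\ the inertia generators must have product trivial and be nontrivial. A single branch point is impossible. With $|G|=8$, one would need two branch points of index $2$, with equal inertia elements, and the quotient by that subgroup would be an unramified cover; one then checks this conflicts with the involution fixed-point count. So $|G|\in\{2,3,4,6\}$. The prime orders are fine. Order $6$ cyclic would contain an involution whose quotient has genus $\ge1$, and Riemann--Hurwitz gives the contribution $6-6/e$ summing to $4$ with values in $\{3,4,5\}$, which is impossible. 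Similarly, $\Z/2\times\Z/3$ is cyclic and already covered.

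The main obstacle is excluding $G\cong(\Z/2)^2$. Its contributions are $2$ per branch point, so there are two branch points, each with stabilizer of order $2$ and the same inertia involution $\tau$ (product trivial). The other two involutions then act freely, which contradicts the fact that every involution of a non-hyperelliptic genus $3$ curve has exactly $4$ fixed points. This leaves $G$ cyclic of order $2$, $3$ or $4$.

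The genuinely delicate input is the claim that involutions on plane quartics always fix exactly $4$ points. I would prove it by linearizing: $\aut{C}$ acts linearly on $\mathbb{P}^2$ via the canonical embedding. An involution is then diagonalizable as $\mathrm{diag}(1,1,-1)$, and the quartic equation must be even in $z$, so the fixed locus is the line $z=0$ (meeting $C$ in $4$ points) and the point $[0:0:1]$, which is not on $C$ because the $z^4$ coefficient is nonzero by smoothness.
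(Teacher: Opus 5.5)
Your argument is correct in substance, and it takes a genuinely different route from the paper.

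\textbf{How the two proofs differ.} The paper starts from the Kuribayashi--Komiya list of possible orders of $\aut{C}$ and writes $|G|=2^a3^b7^c$. It uses Lemma~\ref{lem} (Riemann--Hurwitz for cyclic subgroups, plus the Komiya--Kuribayashi fact that every involution of a non-hyperelliptic genus $3$ curve is bielliptic) to rule out subgroups that are cyclic of order $>4$. It invokes Accola's theorem to show that a Klein four-subgroup has rational quotient, and then eliminates the remaining orders one at a time. You instead apply Riemann--Hurwitz to $C\to C/G$ itself and use the branch-cycle relation for abelian covers, namely that the inertia generators multiply to $1$. You also replace both the bielliptic-involution corollary and Accola's theorem with a direct computation on the canonical model showing that every involution has exactly $4$ fixed points. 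Your approach never needs the classification of automorphism groups. Once tidied as below, it gives the bound in one line and in fact generalizes: for any genus $g$, an abelian $G$ with elliptic quotient satisfies $|G|\le 2g-2$. The paper's route is shorter on the page but rests on heavier external results.

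\textbf{Two misstated cases.} Both are repaired by a fact you already proved.
\begin{itemize}
\item For $|G|=8$, the contributions $8-8/e$ are $4,6,7$. So the branch data would be a \emph{single} branch point of index $2$, not two; two such points would sum to $8$.
\item For $|G|=6$, the contributions are $3,4,5$, and the value $4$ does occur (one branch point with $e=3$). So calling this ``impossible'' is not a numerical statement.
\end{itemize}
In both cases the real reason is that an abelian cover of a genus-$1$ curve cannot have exactly one branch point.

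\textbf{A cleaner organization.} You have at least two branch points, and each contributes at least $|G|/2$. Riemann--Hurwitz then gives $4\ge |G|$ directly. This makes the prime-order analysis and the cases $|G|=6,8$ unnecessary. Only $G\cong(\Z/2\Z)^2$ remains, and your fixed-point argument disposes of it correctly.

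\textbf{Small points in the involution lemma.} Say explicitly why $F$ cannot be odd in $z$: then $z\mid F$, contradicting irreducibility. Also note that the $4$ points on $z=0$ are distinct, either by smoothness or by the parity constraint $r\in\{0,4,8\}$ from Riemann--Hurwitz.
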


We need a technical lemma to prove this proposition.

\begin{lemma}\label{lem}
Let $C$ be a  non-hyperelliptic curve of genus $3$ and $G$ a cyclic subgroup of $\aut{C}$. Then the genus of $C/G$ is $0$ or $1$. Furthermore, if $|G|>4$ then the genus of $C/G$ is $0$.
\end{lemma}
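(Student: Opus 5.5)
Write $G=\langle\sigma\rangle$ of order $n$ and $g'$ for the genus of $C/G$; we may assume $n\ge 2$. The plan is to exploit the canonical embedding. Any automorphism of a non-hyperelliptic genus $3$ curve acts on $H^0(C,\Omega^1)$, and since $\phi$ is defined by the canonical linear system, $\sigma$ extends to a projective linear transformation of $\mathbb{P}^2$ preserving the quartic $\phi(C)$. Over $\Qbar$ we can diagonalize $\sigma$ acting on $H^0(C,\Omega^1)$, with eigenvalues $\zeta^{a},\zeta^{b},\zeta^{c}$, where $\zeta$ is a primitive $n$-th root of unity. The key identity is
\[
g'=\dim H^0(C,\Omega^1)^{G},
\]
since invariant holomorphic differentials on $C$ descend to holomorphic differentials on $C/G$, and pullback identifies $H^0(C/G,\Omega^1)$ with $H^0(C,\Omega^1)^G$. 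So $g'$ is the multiplicity of the eigenvalue $1$.

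For the first claim, suppose $g'\ge 2$. Then $\sigma$ has eigenvalue $1$ with multiplicity at least $2$ on the $3$-dimensional space. Since $\sigma$ acts faithfully on $\mathbb{P}^2$ (an automorphism acting trivially on the canonical image is the identity), the matrix is, after scaling, $\mathrm{diag}(1,1,\lambda)$ with $\lambda\neq1$. Alternatively, one can apply Riemann--Hurwitz: $4=n(2g'-2)+\sum(e_P-1)\ge 2n$, which forces $n=2$, $g'=2$ and no ramification, i.e.\ an unramified double cover of a genus $2$ curve. I would rule this out via the linear picture. In coordinates with $\sigma:(x:y:z)\mapsto(x:y:-z)$, the invariant quartic has the form $F(x,y,z)=z^4+z^2q_2(x,y)+q_4(x,y)$, and the line $z=0$ is fixed pointwise, so it meets $C$ in $4$ fixed points. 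This contradicts the unramified conclusion, and hence $g'\le1$. In general, for $\mathrm{diag}(1,1,\lambda)$ the fixed line $z=0$ meets $C$, giving fixed points; Riemann--Hurwitz with $g'\ge 2$ then fails directly, since $4\ge 2n+\#\text{fixed}\cdot(n-1)\ge 2n+(n-1)$ is impossible for $n\ge2$.

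For the second claim, assume $g'=1$ and use Riemann--Hurwitz: $4=n\cdot 0+\sum_P(e_P-1)$. The ramification points of $C\to C/G$ come from points with nontrivial stabilizer. I would bound the number of fixed points in terms of the eigenvalue structure. With $g'=1$ the eigenvalues are $1,\zeta^b,\zeta^c$ with $b,c\not\equiv0$. I would first try to show that $\sigma^k$ has fixed points whenever $\sigma^k\neq1$: a nontrivial element of finite order acting on $\mathbb{P}^2$ always has a fixed point on $C$ or an isolated fixed point, and I would check this case by case. Then I would count, using $\sum_{P}(e_P-1)\ge(n-1)\cdot\#\{\text{points fixed by all of }G\}$. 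If some point is fixed by $\sigma$, then $n-1\le4$, so $n\le5$; excluding $n=5$ needs care, using $\zeta^b,\zeta^c$ and the shape of invariant quartics.

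The main obstacle is the case where $\sigma$ has no fixed points on $C$ but powers do. There the plan is the direct classification: write the quartic in eigen-coordinates $x,y,z$ with weights $0,b,c$ mod $n$, require $F$ to be semi-invariant and smooth, which forces certain monomials such as $x^4$ or $x^3y$ to appear. This bounds $n$ by the weights present, and for $n>4$ it should force an eigenvalue $1$ to be absent, giving $g'=0$.
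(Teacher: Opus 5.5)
Your argument for the first assertion is correct and takes a different route from the paper. The paper uses Hurwitz to reduce a genus-$2$ quotient to $|G|=2$ and then quotes Kuribayashi--Komiya: every involution of a non-hyperelliptic genus $3$ curve is bielliptic. You instead use $g'=\dim H^0(C,\Omega^1)^G$ and the equivariance of the canonical embedding to write $\sigma=\mathrm{diag}(1,1,\lambda)$. Its pointwise-fixed line meets $C$, which gives a point with stabilizer $G$, and then $4\ge 2n+(n-1)$ is absurd. This is self-contained and in effect reproves the cited corollary.

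The second assertion, however, is not proved. What you give is a plan whose decisive steps (``excluding $n=5$ needs care'', ``it should force an eigenvalue $1$ to be absent'') are the whole content of the claim. Your observation that a finite-order projectivity has a fixed point on $C$ or an isolated fixed point in $\mathbb{P}^2$ carries no information. Here is exactly what remains open. With $g'=1$, write Riemann--Hurwitz as a sum over the branch points $Q\in C/G$:
\[
4=\sum_{Q} n\Bigl(1-\frac{1}{e_Q}\Bigr),
\]
where each term is at least $n/2$ and there is at least one term. Two or more branch points force $n\le 4$. So for $n>4$ there is exactly one branch point, and $n(1-1/e)=4$ with $e\mid n$ leaves only $(n,e)\in\{(5,5),(6,3),(8,2)\}$. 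These are:
\begin{itemize}
\item[(a)] for $n=5$, a single fixed point of $\sigma$;
\item[(b)] for $n=6$, a single orbit of $2$ points with stabilizer $\langle\sigma^2\rangle$;
\item[(c)] for $n=8$, a single orbit of $4$ points with stabilizer $\langle\sigma^4\rangle$.
\end{itemize}
In (b) and (c), $\sigma$ itself is fixed-point-free. These are precisely your ``$n=5$'' and ``main obstacle'' cases. The numerics of Riemann--Hurwitz, which is all the paper's one-line proof invokes, do not exclude them.

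The missing idea is that an abelian Galois cover of a compact Riemann surface cannot be branched over exactly one point. Indeed, $\pi_1(E\setminus\{Q\})$ is generated by $a,b,c$ with $[a,b]\,c=1$. So the local monodromy at $Q$ is the image of a commutator, hence trivial in the cyclic group $G$, contradicting that $Q$ is a branch point. This disposes of all three cases at once, with no need to classify semi-invariant quartics. If you prefer to stay with the canonical model, you would have to carry out that classification: show that no smooth quartic has an automorphism of order $5$, and treat $n=6,8$ via smoothness at the coordinate points. Your proposal does not do this.
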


{\bf Proof Lemma \ref{lem}:}  We are going to apply Hurwitz's formula to the covering $C\twoheadlongrightarrow C/G$ to obtain the desired result. First we obtain that the genus of $C/G$ is $0$, $1$ or $2$. Let us suppose that the genus is $2$, then by Hurwitz's formula we have that $|G|=2$. Therefore, there exists an involution of $C$ that is not bielliptic, and thus by  \cite[Corollary, p.283]{komiya_kuri} $C$ is hyperelliptic, in contradiction with the hypothesis. 

Again, using Hurwitz's formula we obtain that the genus of $C/G$ is $0$ if $|G|>4$. \QED 

{\bf Proof Proposition \ref{quotg1}:} According to the possible full automorphism group for $C$  \cite{komiya_kuri}, we have 
$$
|\aut{C}|\in\{1,2,3,2^2,2\cdot 3,7,2^3,3^2,2^4,2^3\cdot 3,2^4\cdot 3,2^5 \cdot 3 ,2^3\cdot  3 \cdot  7\}.
$$
We can then assume that there exist non-negative integers $a \leq 5, b \leq 2 $ and $c \leq 1$ such that $|G|= 2^a \cdot 3^b \cdot 7^c$. Then we have
\begin{itemize}
\item $c=0$. Since for $c\neq 0$ there would exist a cyclic subgroup $H<G$ of order $7$ and therefore by Lemma \ref{lem} the genus of $C/H$ should be $0$. Hence the genus of $C/G$ should also be $0$.
\item $b\leq 1$. The only possibility for $3^2$ to divide $|G|$ is that $G \cong \Z /9\Z$ (see the complete classification of the full automorphism group \cite{komiya_kuri}).  Then by Lemma \ref{lem} we have that the genus of $C/G$ is $0$, in contradiction with the hypothesis. 
\item $G\not \cong (\Z /2\Z)^2$. This is a direct conclusion using Accola's Theorem (\cite{accola_70},\cite[V.1.10]{farkas_kra}) and the fact that any involution of a non-hyperelliptic curve of genus $3$ is bielliptic (\cite[Corollary, p.283]{komiya_kuri}).
\item $|G|\neq 2^3,  2^4, 2^5$,  since any such abelian group has $(\Z /2\Z)^2$ or a cyclic subgroup of order $>4$ as a subgroup.
\item $|G|\neq 2 \cdot 3, 2^2 \cdot 3, 2^3 \cdot 3, 2^4 \cdot 3, 2^5 \cdot 3, \, $ since the corresponding groups have at least a cyclic subgroup of order $>4$. 
\end{itemize}
Combining the above statements yields the conclusion that $G$ is cyclic of order $2,3$ or $4$. \QED

\section{Non-hyperelliptic modular curves of genus 3}\label{section3}
\subsection{Modular Criteria}
For non-hyperelliptic curves of genus $3,$ Proposition \ref{nonhyper3}  will provide us with an effective criteria to determine when a $\Q$-factor of $J_1(N)$ is $\Q$-isogenous to the jacobian of a non-hyperelliptic modular curve of genus $3.$ There is a similar version of Proposition \ref{nonhyper3} for hyperelliptic curves \cite{BakGonPoo}, where the main difference  appears in condition (iii)(b) below which is not necessary for hyperelliptic curves. Lemma \ref{Lemmaalgo} will give extra information in the new case.

\begin{proposition}\label{nonhyper3}
Let $J_1(N)\twoheadlongrightarrow A$ be a modular abelian $3$-fold  defined over $\Q$. The following statements are equivalent:
\begin{enumerate}
\item[(i)] There exist a non-hyperelliptic genus $3$ curve $C$ and a non-constant $\Q$-morphism $\pi : X_1(N)\longrightarrow C$ such that $S_2(C)=S_2(A)$. In particular, $\jac{C}\Qisog A$.
\item[(ii)] There exist a non-hyperelliptic genus $3$ curve $C'_{/\C}$ and a non-constant $\C$-morphism $\pi' : X_1(N)\longrightarrow C'$ such that $S_2(C')=S_2(A)$.
\item[(iii)] For every basis $\left\{ f_1,f_2,f_3\right\}$ for $S_2(A)$, there exists an irreducible and non-singular homogeneous polynomial $ F(X,Y,Z)\in\C[X,Y,Z]$ of degree $4$ such that:
\begin{itemize}
\item[(a)] $F(f_1,f_2,f_3) =0$, 
\item[(b)] Denote by  $f_i'$  the derivative of $f_i$ with respect to the complex variable $z\in\mathbb{H}$, for $i=1,3$, and  define the function 
$$
\displaystyle \psi_F(f_1,f_2,f_3):=\frac{f_3 f_1' - f_1 f_3'}{\frac{\partial F}{\partial Y}(f_1, f_2,f_3)}\in \C(X_1(N)).
$$
Then $\displaystyle \psi_F(f_1,f_2,f_3)$ is a constant function.
\end{itemize}
\end{enumerate}
\end{proposition}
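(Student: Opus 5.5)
The plan is to prove the cycle (i)$\Rightarrow$(ii)$\Rightarrow$(iii)$\Rightarrow$(i). The first implication is trivial: take $C'=C_{\C}$ and $\pi'=\pi$. The substance lies in (ii)$\Rightarrow$(iii) and (iii)$\Rightarrow$(i).

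For (ii)$\Rightarrow$(iii), fix a basis $\{f_1,f_2,f_3\}$ of $S_2(A)=S_2(C')$. By definition, $\pi'^*H^0(C',\Omega^1)=S_2(A)\,dq/q$, so there is a basis $\omega_1,\omega_2,\omega_3$ of $H^0(C',\Omega^1)$ with $\pi'^*\omega_i=f_i\,dq/q=2\pi i f_i\,dz$. The canonical embedding $\phi=[\omega_1:\omega_2:\omega_3]$ identifies $C'$ with a smooth plane quartic $F=0$. Since $\phi\circ\pi'=[f_1:f_2:f_3]$, we get $F(f_1,f_2,f_3)=0$, which is (a).

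For (b), work in the affine chart $x=X/Z$, $y=Y/Z$ with $f(x,y)=F(x,y,1)$. By (\ref{eq_hol_afin}) the canonical differential is
\[
\omega_3=\lambda\,\frac{dx}{\partial f/\partial y}
\]
for some nonzero constant $\lambda$. Indeed, the canonical embedding with respect to the basis of (\ref{eq_hol_afin}) reproduces the model $f$ itself, and any other choice of $F$ for the same basis differs by a scalar. Now pull back along $\pi'$ with $x=f_1/f_3$. Using homogeneity of degree $3$ for $\partial F/\partial Y$, we have $\partial f/\partial y(x,y)=f_3^{-3}\,\partial F/\partial Y(f_1,f_2,f_3)$ and $dx=(f_1'f_3-f_1f_3')f_3^{-2}\,dz$. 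Hence
\[
\pi'^*\omega_3=\lambda f_3\,\psi_F(f_1,f_2,f_3)\,dz .
\]
Comparing with $\pi'^*\omega_3=2\pi i f_3\,dz$ shows that $\psi_F$ is the constant $2\pi i/\lambda$. The statement says ``for every basis'', so I also note that a change of basis by a matrix in $\mathrm{GL}_3(\C)$ transforms $F$ accordingly, and the same computation applies.

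For (iii)$\Rightarrow$(i), first choose a basis of $S_2(A)$ with rational $q$-expansions. This is possible because $S_2(A)$ is Galois-stable, since $A$ is defined over $\Q$. Then $F$ may be taken in $\Q[X,Y,Z]$: the quartic relations among $f_1,f_2,f_3$ form a $\Q$-rational linear space, and it is one-dimensional because $F$ is irreducible and the $f_i$ are linearly independent, so the image curve is not a conic or line. Let $C$ be the smooth quartic $F=0$ over $\Q$, and define $\pi=[f_1:f_2:f_3]:X_1(N)\to C$. This map is a $\Q$-morphism, since the $f_i$ are weight-2 forms with rational $q$-expansions and their ratios lie in $\Q(X_1(N))$. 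It is non-constant because the $f_i$ are independent.

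The remaining task is to show $S_2(C)=S_2(A)$, and this is where (b) enters. The computation from the previous step, run in reverse, gives
\[
\pi^*\!\left(\frac{dx}{\partial f/\partial y}\right)=f_3\,\psi_F\,dz=c\,f_3\,dz
\]
with $c$ a nonzero constant. Multiplying by the coordinate functions $x=f_1/f_3$ and $y=f_2/f_3$ sends the basis (\ref{eq_hol_afin}) to $c\,f_1\,dz$, $c\,f_2\,dz$, $c\,f_3\,dz$. Therefore $\pi^*H^0(C,\Omega^1)=S_2(A)\,dq/q$.

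I expect the main obstacle to be justifying that this forces $S_2(\pi_*,\jac{C})=S_2(A)$ in the sense of the definition, together with the isogeny $\jac{C}\Qisog A$. I would argue as follows. Both $\pi_*$ and $\nu$ are quotients of $J_1(N)$ with the same pulled-back cotangent space. Abelian subvarieties and quotients of $J_1(N)$ are determined up to isogeny by their differentials, so their kernels have the same connected component, and hence $\jac{C}\Qisog A$.
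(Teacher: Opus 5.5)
Your proposal is correct and follows essentially the paper's proof. It uses the same cycle (i)$\Rightarrow$(ii)$\Rightarrow$(iii)$\Rightarrow$(i), the same pullback of the basis (\ref{eq_hol_afin}) to $\psi_F f_i\,dz$, and the same rational basis defining $\pi=[f_1:f_2:f_3]$. The differences are minor:
\begin{itemize}
\item You show $\psi_F$ is constant by first noting $\omega_3=\lambda\,dx/(\partial f/\partial y)$ on $C'$. The right justification is that both bases induce the same embedding and so are proportional, which is Lemma~\ref{ritz} for the identity map; your remark that different choices of $F$ differ by a scalar is not the point. The paper instead shows $\psi_F$ is modular and applies Lemma~\ref{ritz} on $X_1(N)$.
\item You make explicit why $F$ can be taken rational and why $\jac{C}\Qisog A$ holds; the paper only asserts both.
\end{itemize}
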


The following technical lemma will be used in the proof of the previous proposition.

\begin{lemma}\label{ritz}
Let $\pi :C_1\longrightarrow C_2$ be a non-constant morphism between curves defined over a field $K$ of characteristic zero. Let $\{\omega_1,\dots,\omega_g\}$ be a basis of $H^0(C_2,\Omega^1)$ and  $f\in K(C_1)$ such that $f \pi^*\omega_i$ belongs to $\pi^*H^0(C_2,\Omega^1)$, for all $i$. Then $f\in K$.
\end{lemma}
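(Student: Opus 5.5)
The plan is to reduce the statement to the fact that a meromorphic function on a curve without enough poles must be constant, with the holomorphic differentials of $C_2$ furnishing the pole constraint. Put $h = f$, regarded as a rational function on $C_1$. If $h$ is not constant, it has a pole at some point $P\in C_1$. Every $\omega$ in $H^0(C_2,\Omega^1)$ then satisfies
$$h\,\pi^*\omega=\pi^*\eta_\omega$$
for some $\eta_\omega\in H^0(C_2,\Omega^1)$, and this assignment is linear in $\omega$. So multiplication by $h$ defines a $K$-linear endomorphism $M$ of the finite-dimensional space $V=\pi^*H^0(C_2,\Omega^1)$. Here $\pi^*$ is injective because $\pi$ is non-constant and we are in characteristic zero.

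The first step is to pass to $\overline{K}$, where $M$ has an eigenvalue $\lambda$ with eigenvector $0\neq v\in V\otimes\overline{K}$. This gives $(h-\lambda)v=0$ as differentials on $C_1$. Since $v\neq 0$ and the differentials form a vector space over the field $\overline{K}(C_1)$, we get $h=\lambda$. Hence $h$ is constant, and since $h\in K(C_1)$, its constant value lies in $\overline{K}\cap K(C_1)=K$ (assuming $C_1$ is geometrically irreducible, as curves in the paper are). If $g=0$ the hypothesis is vacuous and the statement fails, so we assume $g\geq 1$. This is implicit, since the lemma is only used with $g=3$.

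The linear-algebra argument is the whole proof, and the pole argument becomes unnecessary. The only subtle point, which I expect to be the main (mild) obstacle, is justifying that the extension of scalars is harmless. Concretely, one must check that the relation $h v=\lambda v$ in $\Omega^1_{\overline{K}(C_1)}$ forces $h=\lambda$ because $v\neq 0$, and that a constant in $\overline{K}$ which is also a rational function defined over $K$ lies in $K$. An alternative that avoids extending scalars is to note that the minimal polynomial $p$ of $M$ satisfies $p(h)v=0$ for all $v\in V$. Then $p(h)=0$ in $K(C_1)$, so $h$ is algebraic over $K$, hence in $K$ because $K$ is algebraically closed in $K(C_1)$.
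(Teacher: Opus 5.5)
Your argument is correct, and it takes a genuinely different route from the paper's.

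The paper argues with divisors. Writing $f\pi^*\omega_i=\pi^*\mu_i$, it shows that a pole $P_0$ of $f$ would force every $\omega_i$ to vanish at $Q_0=\pi(P_0)$. That contradicts base-point-freeness of the canonical system of $C_2$.

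You never look at zeros or poles. Since $fV\subseteq V$ for the nonzero finite-dimensional $K$-space $V=\pi^*H^0(C_2,\Omega^1)$, the minimal polynomial $p\in K[x]$ of multiplication by $f$ gives $p(f)v=0$ for some $v\neq 0$ in the $K(C_1)$-vector space $\Omega^1_{K(C_1)/K}$. Hence $p(f)=0$, so $f$ is algebraic over $K$ and therefore lies in $K$. Your second (minimal-polynomial) version is the one to write up, since it needs no base change to $\overline{K}$. The opening remark about poles can simply be dropped.

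What your route buys is generality and robustness. It uses nothing about holomorphy, ramification or base points. In fact it shows that any $f\in K(C_1)$ preserving some nonzero finite-dimensional $K$-space of differentials on $C_1$ is constant.

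It also sidesteps a bookkeeping slip in the paper's pushforward step. There, the coefficient of $Q_0$ in $\pi_*(\mathrm{div}\,f)$ is $\sum_{P\in\pi^{-1}(Q_0)}n_P$, not $n_{P_0}$. The paper's argument should instead compare orders at $P_0$ itself, using $\mathrm{ord}_{P_0}\pi^*\omega=e_{P_0}(\mathrm{ord}_{Q_0}\omega+1)-1$.

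Conversely, the repaired geometric argument uses the hypothesis more economically. It only needs $f\pi^*\omega$ to be the pullback of a holomorphic differential for a family of $\omega$'s with no common zero. It needs no stability of a whole space under repeated multiplication by $f$.

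Both proofs need $g\geq 1$ and $K$ algebraically closed in $K(C_1)$. The paper leaves these implicit, and you were right to flag them.
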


{\bf Proof:} One has $f \pi^*\omega_i=\pi^*\mu_i$ for all $i$, where $\mu_i\in H^0(C_2,\Omega^1)$. Applying the pushdown of $\pi$ we obtain $\pi_*(f \pi^*\omega_i)=\pi_*(\pi^*\mu_i)$. Therefore
$$
\pi_*({\rm div}(f))+{\rm deg}\,\pi\,{\rm div}(\omega_i)={\rm deg}\,\pi\,{\rm div}(\mu_i).
$$
Let ${\rm div}(f)=\sum_{P\in C_1}{n_P P}$, then $\pi_*({\rm div}(f))=\sum_{P\in C_1}{n_P \pi(P)}$. Assume that $f$ is not a constant, then there exists $P_0\in C_1$ such that $n_{P_0}<0$. Let $Q_0=\pi(P_0)$. Thus  $n_{P_0}+{\rm deg}\,\pi\, {\rm ord}_{Q_0}(\omega_i)= {\rm deg}\,\pi\, {\rm ord}_{Q_0}(\mu_i)\ge 0$. Therefore ${\rm ord}_{Q_0}(\omega_i)>0$ for all $i$. This is not possible because the linear system of regular differentials is base-point free.
\QED



{\bf Proof Proposition \ref{nonhyper3}:} The assertion that (i) implies (ii) is trivial. Assuming (ii) we are going to prove (iii). Let
$$
\mathbb{H}\stackrel{\rho}{\longrightarrow} X_1(N) \stackrel{\pi'}{\longrightarrow} C' \stackrel{\phi}{\longrightarrow}\mathbb{P}^2=\mathbb{P}(H^0(C'_{/\C},\Omega^1))\, .
$$
Then let $x_1,x_2,x_3$ the coordinates  on $\mathbb{P}^2$ and
$$
\begin{array}{l}
\omega_i=\phi^*(x_i) \in H^0(C'_{/\C},\Omega^1), \,\,\,i=1,2,3\, ,\\
\mu_i=\pi'^*(\omega_i) \in H^0(X_1(N)_{/\C},\Omega^1), \,\,\,i=1,2,3.
\end{array}
$$
Since $C'$ is non-hyperelliptic there exists an irreducible and non-singular homogeneous polynomial $F\in \C[X,Y,Z]$ of degree $4$ such that $\phi(C'):F(x_1,x_2,x_3)=0$. Hence $F((\phi\circ\pi')^*(x_1),(\phi\circ\pi')^*(x_2),(\phi\circ\pi')^*(x_3))=0$, i.e. $F(\mu_1,\mu_2,\mu_3)=0$. Let us call $x=\mu_1/\mu_3,y=\mu_2/\mu_3\in\C(X_1(N))$ and $f\in \C[X_1,X_2]$ the dehomogenization of $F$, then $f(x,y)=0$. Now we identify $C'$ with $\phi(C')\subset \mathbb{P}^2$, and let $f(u,v)$ the affine equation. Then by (\ref{eq_hol_afin}) we have 
$$
H^0 (C',\Omega^1) = \langle \frac{u}{\frac{\partial f}{\partial X_2}(u,v)}\,du\, , \frac{v}{\frac{\partial f}{\partial X_2}(u,v)}\,du \, , \frac{1}{\frac{\partial f}{\partial X_2}(u,v)}\,du\rangle_\C\,. 
$$
Therefore 
$$
\begin{array}{l}
\pi'^*(u)=\pi'^*\left(\frac{\omega_1}{\omega_3}\right)=\frac{\pi'^*(\omega_1)}{\pi'^*(\omega_3)}=\frac{\mu_1}{\mu_3}=x,\\
\pi'^*(v)=\pi'^*\left(\frac{\omega_2}{\omega_3}\right)=\frac{\pi'^*(\omega_2)}{\pi'^*(\omega_3)} =\frac{\mu_2}{\mu_3}=y.
\end{array}
$$
Thus
$$
\pi'^*H^0 (C',\Omega^1) = \langle \frac{x}{\frac{\partial f}{\partial X_2}(x,y)}\,dx\, , \frac{y}{\frac{\partial f}{\partial X_2}(x,y)}\,dx \, , \frac{1}{\frac{\partial f}{\partial X_2}(x,y)}\,dx\rangle_\C\,.
$$
Classically, one can identify functions (resp. regular differentials) on $X_1(N)$ with modular functions (resp. modular forms of weight $2$) with respect to $\Gamma_1(N)$. So we let $\mu_i=f_i(z)\, dz$ with $f_i\in S_2(N)$, for $i=1,2, 3$. So $x=f_1/f_3$ and $y=f_2/f_3$. In particular,  $\{f_1,f_2,f_3\}$ form a basis for $S_2(A)$, $F(f_1,f_2,f_3)=0$ and 
$$
\begin{array}{l}
\frac{x}{\frac{\partial f}{\partial X_2}(x,y)}\,dx=\psi_F(f_1,f_2,f_3) f_1(z)dz\\
\frac{y}{\frac{\partial f}{\partial X_2}(x,y)}\,dx=\psi_F(f_1,f_2,f_3) f_2(z)dz\\
\frac{1}{\frac{\partial f}{\partial X_2}(x,y)}\,dx=\psi_F(f_1,f_2,f_3) f_3(z)dz
\end{array}
$$
Now let us proof that $\psi_F(f_1,f_2,f_3)$ is a modular function. A straightforward computation shows that, if $f_1,f_3\in S_2(N)$,  then $f'_1f_3 - f_1f'_3\in S_6(N)$. The modular form $\frac{\partial F}{\partial Y}(f_1,f_2,f_3)$ belongs to $S_6(N)$ since $\frac{\partial F}{\partial Y}(X,Y,Z)$ is a homogeneous polynomial of degree $3$ . Therefore 
 $$
 \frac{f'_1f_3 - f_1f'_3}{\frac{\partial F}{\partial X_2}(f_1,f_2,f_3)}\in \C(X_1(N)). 
 $$
Let us call it $\Psi(z)$. Then for all $i$ we have
$$
\psi_F(f_1,f_2,f_3) f_i(z)dz=\Psi \cdot \mu_i= \Psi\cdot  \pi' (\omega_i)\in  \pi'^*H^0 (C',\Omega^1).
$$
Then applying Lemma \ref{ritz} we conclude that the function $\Psi(z)$ is constant.

Let now $\left\{ g_1,g_2,g_3\right\}$ be another basis for $S_2(A)$ and $M\in {\rm GL}_3 (\C)$ from the matrix of the change of basis $\left\{ g_1,g_2,g_3\right\}$ to $\left\{ f_1,f_2,f_3\right\}$, that is $(g_1,g_2,g_3)^t=M^{-1}(f_1,f_2,f_3)^t$. Lets define
$$
G(X,Y,Z)=F((X,Y,Z)\cdot M^t)\in \C[X,Y,Z].
$$
Then $G$ is an irreducible and non-singular homogeneous polynomial of degree $4$ such that $G(g_1,g_2,g_3)=0$. That $\psi_G(g_1,g_2,g_3)$ is a constant function follows from the equality
$$
\psi_G(g_1,g_2,g_3) =det(M^{-1})\, \psi_F(f_1,f_2,f_3).
$$
We now assume (iii) and prove (i). Let $g$ be the genus of $X_1(N)$. Since $X_1(N)$ and $A$ are both defined over $\Q$, there exists a $\Q$-basis  $\left\{ f_1,\dots,f_g\right\}$ of  $S_2(X_1(N))=S_2(N)$ such that $\left\{ f_1,f_2,f_3\right\}$ is a $\Q$-basis of $S_2(A)$. Therefore the corresponding polynomial $F(X_1,X_2,X_3)$ has rational coefficients. Let $C$ be the smooth plane quartic $C:F(X_1,X_2,X_3)=0$. The following map
$$
\begin{array}{ccccccc}
\pi&:&X_1(N) & \rightarrow  & \mathbb{P}^{g-1} & \rightarrow &  \mathbb{P}^{2}\\
& & P & \rightarrow  & [ f_1(P):\dots :f_g(P) ]  & \rightarrow &   [f_1(P):f_2(P):f_3(P)]
\end{array}
$$
defines a non-constant $\Q$-morphism from $X_1(N)$ to $C$, i.e.  $C$ is a non-hyperellip\-tic modular curve (defined over $\Q$) of genus $3$. Now, using the same argument as above, since $\psi_F(f_1,f_2,f_3)$ is constant, it follows that $S_2(C)=S_2(A)$. In particular, this implies $\jac{C}\Qisog A$. \QED

\begin{remark}\label{obs1}{\rm  Let $J_1(N)\twoheadlongrightarrow A$ be a modular abelian $3$-fold such that $S_2(A)=\langle f_1,f_2,f_3\rangle $ and $C: F(X,Y,Z)=0$ be a smooth plane quartic such that $F(f_1,f_2,f_3)=0$ and $\psi_F (f_1,f_2,f_3)$ is not a constant function. Then the jacobian of the curve $C$ is not necessarily isogenous to the abelian variety $A$. For example: let $\{f_1,f_2,f_3\}$ be a $\Q$-basis of $S_2(A)$, where $A$ is the new modular abelian $3$-fold $A_{120A_{\{0,0,0,2\}}} \times E_{120A}$. There is a $\Q$-rational smooth plane quartic $C: F(X,Y,Z)=0$ with $F(f_1,f_2,f_3)=0$ and for which $\psi_F(f_1,f_2,f_3)$ is not a constant function. In fact, $C$ is modular and $\jac{C} \not\sim A$, but  $\jac{C}\Qisog A_{30A_{\{0,2\}}} \times X_0(15)$, i.e. $C$ is non-new of level $30$ (see Example \ref{C120}).
}\end{remark}

\begin{remark}{\rm  It can happen that for $S_2(A)= \langle f_1,f_2,f_3\rangle_\Q \subseteq  S_2 (N)$ there exists a (plane) non-hyperelliptic curve of genus $3$ with equation  $C_d: F_d(X,Y,Z)=0$ of degree $d\geq 5$ such that $F_d(f_1,f_2,f_3)= 0$. In that case,  the image $C: G(X,Y,Z)=0$ of the canonical embedding of $C_d$ determined by $\{f_1,f_2,f_3\}$ is a modular curve (since the inclusion between the function fields $ \Q (C)\subseteq \Q (X_1(N))$ implies the existence of a $\Q$-morphism from $X_1(N)$ onto $C$), however $\jac{C}$ is  not necessarily $\Qbar$-isogenous to $A$ (see Examples \ref{degree7} and  \ref{Modul243}).
}\end{remark}

\begin{lemma}\label{Lemmaalgo}
Let $C$ be a non-hyperelliptic new modular curve of genus $3$ and $\pi : X_1(N)\twoheadlongrightarrow C$ the corresponding modular parametrization. Then:
\begin{itemize}
\item[(i)] There exist $h_1,h_2,h_3\in S_2(N)$ with rational $q$-expansions 
$$
\left\{
\begin{array}{l}
h_1(q) =  q   +   O(q^2)  \\
h_2(q)  =  q^2 +  O(q^3) \\
h_3(q)  =  O(q^3)\, ,
\end{array}
\right.
$$
such that $ S_2(C)=\langle h_1,h_2,h_3 \rangle $  with
$\ord_q h_3\leq 5$. Furthermore, if $\jac{C}$ is $\Q$-simple, then $\ord_q h_3 < 5$.
\item[(ii)] Let $\phi:C\longrightarrow \mathbb{P}^2$ be the canonical embedding given by the basis of regular differentials in (i). Then $\phi(C):F(X,Y,Z)=0$ is a smooth plane quartic defined over $\Q$ with the $\Q$-rational point $P_\infty =(1:0:0)$. Moreover, $P_\infty$ is a flex (resp. a hyperflex) if  $\ord_q h_3\geq 4$ (resp. $\ord_q h_3= 5$).
\end{itemize}
\end{lemma}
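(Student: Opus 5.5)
The plan is to put a $\Q$-basis of $S_2(C)$ into echelon form with respect to the order at the cusp $\infty$, and to read off the bounds on $\ord_q h_3$ from the arithmetic of newforms and from the Weierstrass gap structure at the image of $\infty$.

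\textbf{Step 1: the echelon basis.} Since $C$ is new, $S_2(C)=S_2(\jac{C})$ is spanned by the Galois conjugates of newforms $f$ with $\jac{C}\Qisog\prod A_f^{n_f}$. Newness of $C$ should force each $n_f=1$, since $S_2$ of the new part contains each newform space only once. Hence $S_2(C)$ is a direct sum of the spaces $\langle {}^\sigma f\rangle$, each of which has a $\Q$-rational basis. Take a $\Q$-basis of $S_2(C)$ and Gaussian-eliminate on $q$-expansion coefficients, yielding rational forms with strictly increasing orders $1\le m_1<m_2<m_3$.

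\textbf{Step 2: forcing $m_1=1$, $m_2=2$.} Every newform has $a_1=1$, so some element of $S_2(C)$ has order $1$ and $m_1=1$. For $m_2=2$, the plan is an argument on the order-$\le 2$ truncation. Suppose every element of $S_2(C)$ with zero $q$-coefficient also has zero $q^2$-coefficient. Then $a_2(g)=c\,a_1(g)$ for all $g\in S_2(C)$ and some constant $c$. Applying this to the conjugate newforms gives $a_2({}^\sigma f)=c$ for all $\sigma$ and all $f$. If two different Galois orbits occur, or one orbit has dimension $>1$, this contradicts the fact that newforms with equal $a_2$ are distinguished at other primes only in a controlled way. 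A cleaner route is geometric: the map $\phi\circ\pi$ sends $\infty$ to a point $P$, and $m_i-1$ are the vanishing orders of canonical differentials at $P$, ramification-scaled. If $e$ is the ramification index of $\pi$ at $\infty$, then the orders $m_i$ equal $e(n_i+1)$, where $n_1<n_2<n_3$ are the orders of vanishing of a basis of $H^0(C,\Omega^1)$ at $P$, and $n_1=0$ by base-point-freeness. Then $m_1=1$ forces $e=1$, so the $m_i$ are $1+n_i$. Non-hyperellipticity gives $n_2=1$, since a canonical curve has a tangent line and no differential vanishes exactly to order $0$ and $\ge2$ only. So $m_2=2$ and $m_3=1+n_3$.

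\textbf{Step 3: bounding $m_3$ and part (ii).} On a plane quartic, the tangent line meets $C$ at $P$ with multiplicity $n_3\in\{2,3,4\}$ by B\'ezout. Hence $\ord_q h_3=m_3\le5$, with $m_3\ge4$ exactly when $P$ is a flex and $m_3=5$ exactly when $P$ is a hyperflex. Since $h_1$ has order $1$ and the others higher, $\phi(\pi(\infty))=(1:0:0)$. Because the $h_i$ are rational, $F$ has rational coefficients and $P_\infty$ is $\Q$-rational, as in the proof of Proposition \ref{nonhyper3}.

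\textbf{Step 4: the $\Q$-simple refinement, which is the main obstacle.} If $\jac{C}\Qisog A_f$ with $f$ having cubic $K_f$, I need to exclude $m_3=5$. The coefficients $a_3$ and $a_4$ of the conjugates ${}^\sigma f$ must then be linear combinations of $1$ and $a_2$ in a compatible way. In particular $a_3$ and $a_4$ would lie in $\Q+\Q a_2$, a degree-$\le2$ subspace of the cubic field $K_f$. Since $a_4=a_2^2-\varepsilon(2)2$ when $2\nmid N$, or $a_4=a_2^2$ when $2\mid N$, the elements $1,a_2,a_2^2$ would be dependent, so $a_2\in\Q$. Then $a_3\in\Q$ as well. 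I would try to contradict this using that the first few $a_n$ generate $K_f$, but that is false in general, so the real argument is probably geometric. A hyperflex at a rational point together with $C$ being modular and new should conflict with simplicity via the automorphisms or Atkin--Lehner structure of the curve. I expect this step to require the most care, and I would first verify it against the computed tables.
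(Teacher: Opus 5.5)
\textbf{Steps 1--3 are correct.} They take a more geometric route than the paper. The paper splits into the cases $\mathbf{A}$, $\mathbf{AE}$, $\mathbf{EEE}$ and builds the echelon basis by hand from newforms whose $a_2$'s are not all equal. It proves $\mathrm{ord}_q h_3\le 5$ by checking that for $\mathrm{ord}_q h_3\ge 6$ the fifteen quartic monomials in $h_1,h_2,h_3$ have pairwise distinct $q$-orders, and it gets the flex/hyperflex statements by reading off which coefficients $a_{ijk}$ of $F$ must vanish. You instead observe that if $\pi^*\omega=g\,\frac{dq}{q}$, then $\mathrm{ord}_q g=e(\mathrm{ord}_P\,\omega+1)$ at $P=\pi(\infty)$. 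So $a_1=1$ forces $e=1$, and $(m_1,m_2,m_3)=(1,2,k+1)$ with $k\in\{2,3,4\}$ the contact order of the tangent line at $P$. This gives the bound and the flex/hyperflex characterizations at once, even as equivalences. The arithmetic attempt at the start of Step 2 is too vague to count, but the geometric argument replaces it.

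\textbf{The gap is Step 4.} Assuming $m_3=5$, you reach $a_2\in\Q$ and then stop, guessing that extra geometric input (automorphisms, Atkin--Lehner operators) is needed. Nothing more is needed, because $a_2\in\Q$ contradicts Step 2. In the $\Q$-simple case $S_2(C)=\langle f,{}^\sigma f,{}^\beta f\rangle$. If $a_2\in\Q$, then $f-{}^\sigma f$ and $f-{}^\beta f$ are two independent elements of order $\ge 3$. Hence $m_2\ge 3$, i.e.\ $h^0(K-2P)=2$, and $C$ would be hyperelliptic.

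Equivalently, and this is how the paper argues: Step 2 gives $K_f=\Q(a_2)$ with basis $1,a_2,a_2^2$. The elements of $S_2(C)$ of order $\ge 3$ are then the multiples of $\sum_n\lambda_0(a_n)q^n$, where $\lambda_0\colon K_f\to\Q$ is the $a_2^2$-coordinate. Its $q^4$-coefficient is $\lambda_0(a_4)=1$, because $a_4=a_2^2-2$ or $a_4=a_2^2$ according as $2\nmid N$ or $2\mid N$. Hence $\mathrm{ord}_q h_3\le 4$. The paper's explicit trace formula $h_3=A\gamma_3q^3+Aq^4+O(q^5)$ with $A\neq 0$ is exactly this computation.

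As written, your proposal proves everything except the $\Q$-simple refinement. The missing step is this one-line observation, not a new geometric idea.
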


\begin{remark}{\rm 
In the previous lemma, the point $P_\infty$ is never a hyperflex of $C$ if $\jac{C}$ is $\Q$-simple. When $\jac{C}$ does split over $\Q$, it may happen that $P_\infty$ is a hyperflex of $C$ (c.f. $C^A_{39A_{\{0,6\}}}$, $C^{A,B,D}_{99}$ from Table \ref{tableg3} in the Appendix).

}\end{remark}

{\bf Proof.}
(i) Our strategy is to split the proof into three cases according to the decomposition of $\jac{C}$ over $\Q$ : \\
\noindent Case \textbf{A}: $\, \jac{C}$ is $\Q$-simple. Then $\jac{C}\Qisog  A_f$ with $f\in S_2(N,1)$.\\
\noindent Case \textbf{AE}:\,  $\jac{C}\Qisog A\times E$, where $E$ is an elliptic curve over $\Q$ and $A$ is a $\Q$-simple $2$-fold. Then there exist $g\in S_2(N,1)$ such that $A_{g}\Qisog E$ and $f\in S_2(N,\varepsilon)$ such that $A_{f}\Qisog A$ and $\ord(\varepsilon)\in\{1,2,3,4,6\}$ (since $\Q(\varepsilon)\subset K_f$).\\
\noindent Case \textbf{EEE}: \,  $\jac{C}\Qisog E_1\times E_2\times E_3$, where $E_1,E_2,E_3$ are elliptic curves defined over $\Q$. Then there exist $f_i\in S_2(N,1)$ such that $A_{f_i}\Qisog E_i$, $i=1,2,3$.\\

Following \cite[Corollary 7.3 (i)]{BakGonPoo}, there exists a basis $\{g_1,g_2,g_3\}$ of $S_2(C)$ such that $g_i(q)=q+\sum_{n\geq 2}{a_n^{(i)}q^n}$, and since $C$ is non-hyperelliptic, 
\begin{equation}\label{a2}
 \mbox{\em it is not possible that $a_2^{(1)}=a_2^{(2)}=a_2^{(3)}$.}
\end{equation}
 
 We are going to apply (\ref{a2}) to our three cases: \\

\noindent Case \textbf{A}. Let $f(q)=\sum_{n\ge 1}{a_n q^n}$. Then by (\ref{a2}) we have $a_2\notin \Q$, that is,  $K_f=\Q(a_2)$. In this case $S_2(A_f)=\langle f,{}^\sigma f,{}^\beta f  \rangle$, where $\{id,\sigma,\beta\}$ are the $\Q$-embeddings of $K_f$ into $\Qbar$. Now we construct an explicit $\Q$-basis $\{h_1,h_2,h_3\}$ for $S_2(A_f)$. Let $p(x)=x^3+ax^2+bx+c$ be the minimal polynomial of $a_2$ and 
$$
g_i(q)=\frac{1}{3}\,\sum_{n\ge 1} \mbox{Tr}_{K_f/\Q}(a_2^{i-1}\,a_n)q^n\, ,\qquad i=1,2,3.
$$
 Then $g_1,g_2,g_3$ have rational $q$-expansion and $S_2(C)=\langle g_1,g_2,g_3\rangle$. Now, let be
$$
\begin{array}{l}
\displaystyle h_1(q)\!=\!g_1(q)\, ,\\
\displaystyle h_2(q)\!=\!\frac{18}{\mbox{disc}(p'(x))}(g_2(q)+\frac{a}{3}g_1(q))\, ,\\
\displaystyle h_3(q)\!=\!g_3(q)\!-\!\frac{1}{3}(a^2\!-2b)g_1(q)\!+\!\frac{1}{9}(2a^3\!-7ab\!+9c)g_2(q)\!=\!A\gamma_3 q^3\!+A q^4\!+ O(q^5),\\
\end{array}
$$
where $a_3=\alpha_3+\beta_3\,a_2+\gamma_3\,a_2^2$, since $a_3\in K_f=\Q(a_2)$, and
$$
A=\frac{2}{3}\frac{\mbox{disc}(p(x))}{\mbox{disc}(p'(x))}.
$$
Since $p(x)$ has three different real roots, both $\mbox{disc}(p(x))\neq 0$ and $\mbox{disc}(p'(x))\neq 0$, and therefore $A\in \Q^\ast$. After normalizing, we have $h_3(q)=q^3+O(q^4)$ when $\gamma_3\ne 0$ and $h_3(q)=q^4+O(q^5)$ when  $\gamma_3= 0$.

\noindent Case \textbf{AE}. Let be $K_f=\Q(\sqrt{d})$, $\aut{K_f}=\{id,\sigma\}$ and 
$$
\begin{array}{rcl}
f(q)&=&q+(A_2+B_2\sqrt{d})q^2+O(q^3)\, ,\\
{}^\sigma f(q)&=&q+(A_2-B_2\sqrt{d})q^2+O(q^3)\, ,\\
g(q)&=&q+c_2q^2+O(q^3)\, ,
\end{array}
$$
with $A_2, B_2, c_2 \in \Q\,.$

Then by (\ref{a2}) there are two possibilities depending on whether $B_2=0$ or $B_2\ne 0$.
\begin{itemize}
\item If $B_2\ne 0$, then we choose
$$
\begin{array}{l}
h_1(q)=\frac{f(q)+{}^\sigma f(q)}{2}=q+O(q^2)\, ,\\
h_2(q)=\frac{f(q)-{}^\sigma f(q)}{2 B_2\sqrt{d}}=q^2+O(q^3)\, ,\\
h_3(q)=g(q)-h_1(q)-(c_2-A_2)h_2(q) =O(q^3)\, .
\end{array}
$$

\item If $B_2=0$, from (\ref{a2}) we have $A_2\ne c_2$, hence
$$
\begin{array}{l}
h_1(q)=\frac{f(q)+{}^\sigma f(q)}{2}=q+O(q^2)\, ,\\
h_2(q)=\frac{g(q)-h_1(q)}{c_2-A_2}=q^2+O(q^3)\, ,\\
h_3(q)=\frac{f(q)-{}^\sigma f(q)}{2 \sqrt{d}}=O(q^3)\, .\\
\end{array}
$$
\end{itemize}

\noindent Case \textbf{EEE}. Let
$$
\begin{array}{l}
f_1(q)=q+a_2q^2+O(q^3)\, ,\\
f_2(q)=q+b_2q^2+O(q^3)\, ,\\
f_3(q)=q+c_2q^2+O(q^3)\, ,
\end{array}
$$
with $a_2, b_2, c_2 \in \Q\,.$

Then by (\ref{a2}) we can assume that $a_2\ne b_2$, and thus
$$
\begin{array}{l}
h_1(q)=f_1(q)=q+O(q^2)\, ,\\
h_2(q)=\frac{f_1(q)-f_2(q)}{a_2-b_2}=q^2+O(q^3)\, ,\\
h_3(q)=f_3(q)-f_1(q)-(c_2-a_2)h_2(q) =O(q^3)\, .
\end{array}
$$
Let $n\!=\!\ord_{\!q} h_3$. For all the cases above, the degree four monomial $h_1^ih_2^jh_3^{4-i-j}$ has order $1+2j+n(4-i-j)$. 
It is easy to check that for $n\geq 6$ all these orders are different, and hence there is no $F\in \Q[X,Y,Z]$ of degree $4$ such that $F(h_1,h_2,h_3)=0$.

(ii) Let $S_2(C)$ be choosen as above, then the image of $C$ by the canonical embedding $\phi$ is a smooth plane quartic of the form
\begin{equation} 
\sum_{i,j,k\in\Z_{\ge 0}}^{i+j+k=4} a_{ijk} X^iY^jZ^k =0\, ,\qquad \mbox{where $a_{ijk} \in \Q.$} \nonumber
\end{equation}
\noindent  For $X:=\!h_1(q),Y:=\!h_2(q)$ and $Z:=\!h_3(q),$ the degree four monomials $X^iY^jZ^k$ have $q$-expansions $ X^4=q^4+O(q^5), {}~X^3Y=q^5+O(q^6),$ and $X^iY^jZ^k =O(q^6)$ otherwise. It follows that $a_{400},a_{310}=0,$ and therefore $P_\infty:=(1:0:0) \in \phi (C)(\Q). $ The tangent line $l_\infty $ at $P_\infty$ is the line with equation $Z=0.$ Furthermore, for $h_3(q)= q^4 + O(q^5)$ we have $ X^2Y^2=q^6+O(q^7)$ and $ X^iY^jZ^k =O(q^7)$ for all the degree four monomials different from $ X^2Y^2,X^4,X^3Y.$ Hence $ a_{220}=0$. In this case, $P_\infty$ is at least an ordinary flex. Similarly, for $h_3(q)=q^5+O(q^6)$ the degree four monomials have $q$-expansions $X^2Y^2=q^6+O(q^7), {}~XY^3=q^7+O(q^8)$ and $X^iY^jZ^k =O(q^8)$ for degree four monomials different from $X^2Y^2,X^4,X^3Y,XY^3.$ In this case,  $a_{220}=a_{130}=0$, and therefore $P_\infty$ is an hyperflex, which proves the assertion.
\QED
\subsection{Computational Algorithm}
Proposition \ref{nonhyper3} provides us with a theoretical algorithm to recognize whether or not a modular abelian $3$-fold corresponds to a non-hyperelliptic modular curve of genus $3$. The following result gives us a computational algorithm\,:

\begin{proposition}\label{compualgo}
Let $J_1(N)\twoheadlongrightarrow A$ be a modular abelian $3$-fold  defined over $\Q$ and let $S_2(A)=\langle h_1(q),h_2(q),h_3(q)\rangle_{\Q}$. If there exist a non-singular homogeneous polynomial $F\in \Q[X,Y,Z]$ of degree $4$ and a constant $c_F\in \Q^\ast $ such that 
\begin{itemize}
\item[(i)] $F(h_1(q),h_2(q),h_3(q))=O(q^{c_N})$, where $\displaystyle c_N=\frac{2}{3}[SL_2(\Z)\,:\,\Gamma_1(N)] $ ,
\item[(ii)] $\psi_F(h_1(q),h_2(q),h_3(q))= c_F +O(q^{c'_N}) $,  where $\displaystyle c'_N=\frac{1}{2}[SL_2(\Z)\,:\,\Gamma_1(N)]  $ ,
\end{itemize}
then the curve $C:F(X,Y,Z)=0$ is a non-hyperelliptic modular curve of level $N$ such that $\jac{C}\Qisog A$. 
\end{proposition}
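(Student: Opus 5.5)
The plan is to reduce to criterion (iii) of Proposition \ref{nonhyper3}, turning the two truncated $q$-expansion conditions into exact identities with the Sturm bound. For $\Gamma_1(N)$, a cusp form of weight $k$ whose $q$-expansion at $\infty$ vanishes to order greater than $\frac{k}{12}[SL_2(\Z):\Gamma_1(N)]$ is identically zero. Here we use the index $[SL_2(\Z):\pm\Gamma_1(N)]$ as appropriate, which only makes the bound more favourable. Since $\Gamma_1(N)$ has no elliptic points for $N\ge 4$, this is just the statement that a nonzero form has exactly $\frac{k}{12}\cdot$(index) zeros on $X_1(N)$, counted with multiplicity.

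The first step handles condition (i). Each $h_i$ lies in $S_2(N)$, so $F(h_1,h_2,h_3)$ is a cusp form of weight $8$. Its number of zeros would be $\frac{8}{12}[SL_2(\Z):\Gamma_1(N)]=c_N$, so vanishing to order $c_N$ at $\infty$ forces $F(h_1,h_2,h_3)=0$ identically. (If the Sturm bound requires a strict inequality, $O(q^{c_N})$ is read as vanishing to order $\ge c_N$, and the bound counts zeros of a nonzero form, so this still suffices.) Because $F$ is non-singular of degree $4$, it is irreducible, as a smooth plane curve is irreducible. Thus (iii)(a) holds for the basis $\{h_1,h_2,h_3\}$.

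The second step handles condition (ii). As computed in the proof of Proposition \ref{nonhyper3}, both $h_3h_1'-h_1h_3'$ and $\frac{\partial F}{\partial Y}(h_1,h_2,h_3)$ are cusp forms of weight $6$. Consider
$$G:=h_3h_1'-h_1h_3'-c_F\,\frac{\partial F}{\partial Y}(h_1,h_2,h_3)\in S_6(N).$$
Condition (ii) gives $G=\frac{\partial F}{\partial Y}(h_1,h_2,h_3)\cdot O(q^{c'_N})$, so $\ord_q G\ge c'_N+\ord_q \frac{\partial F}{\partial Y}(h_1,h_2,h_3)\ge c'_N$. The Sturm bound in weight $6$ is $\frac{6}{12}[SL_2(\Z):\Gamma_1(N)]=c'_N$, hence $G\equiv 0$. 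Before dividing by it, we must check that $\frac{\partial F}{\partial Y}(h_1,h_2,h_3)\not\equiv 0$; otherwise $\psi_F$ is not defined. This follows from (ii) itself, which presupposes that the quotient is defined, together with $c_F\neq0$ (if the denominator vanished, the numerator $G+c_F\cdot 0$ would have to vanish too). It follows that $\psi_F(h_1,h_2,h_3)=c_F$ is constant, which is (iii)(b).

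Finally, Proposition \ref{nonhyper3} requires (iii) for every basis, but its proof shows that validity for one basis transfers to any other through the change-of-variables formula $\psi_G=\det(M^{-1})\psi_F$. Moreover, the implication (iii)$\Rightarrow$(i) is carried out with a $\Q$-basis and rational $F$, which is exactly our situation. Hence $C:F=0$ is a non-hyperelliptic modular curve of level $N$ with $S_2(C)=S_2(A)$, and in particular $\jac{C}\Qisog A$.

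The main obstacle I expect is the bookkeeping in the Sturm-type bound. This includes the precise constant (index of $\Gamma_1(N)$ versus $\pm\Gamma_1(N)$, and the presence of elliptic points for $N\le3$) and whether "$O(q^{c})$" supplies the strict inequality needed. I would check that $c_N$ and $c'_N$ equal $\frac{k}{12}$ times the relevant index for $k=8,6$, and handle small $N$ separately or note that $S_2(N)$ has dimension $<3$ there.
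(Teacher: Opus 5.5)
Your proposal is correct and follows the paper's own proof. The paper applies the Sturm-type vanishing bound to $F(h_1,h_2,h_3)\in S_8(N)$ and to $G=h_1'h_3-h_1h_3'-c_F\frac{\partial F}{\partial Y}(h_1,h_2,h_3)\in S_6(N)$, then invokes Proposition~\ref{nonhyper3}, exactly as you do. Your additional remarks only make explicit details the paper leaves implicit: irreducibility from smoothness, the factor-of-two margin from using $[SL_2(\Z):\pm\Gamma_1(N)]$, non-vanishing of the denominator, and the transfer of (iii) from one basis to all bases.
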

 
\begin{remark}{\rm 
Let $A$ be a new modular abelian variety of level $N$. If $A$ is a quotient of $J_0(N)$  then $\Gamma_1(N)$ could be replaced by $\Gamma_0(N)$ in the formulas for $c_N$ and $c'_N$. The only case when it could not be replaced is when $A\Qisog A_f\times A_g$ and $f\in S_2(N,\varepsilon)$ with $\varepsilon$ non-trivial. In that case\footnote{See \cite[\S 6]{GonGon} for the definition of the congruence subgroup $\Gamma(N,\varepsilon)$  and the corresponding modular curve $X(N,\varepsilon)$.},  $A$ is a quotient of $\jac{X(N,\varepsilon)}$ and $\Gamma_1(N)$ could be replaced by $\Gamma(N,\varepsilon)$ in the formulas for $c_N$ and $c'_N.$
}\end{remark}

{\bf Proof.}
For a positive integer $k$, it is well known that if $f\in S_k(N)$ and $f(q)=O(q^c)$ with $c\ge\frac{k}{12}[SL_2(\Z)\,:\,\Gamma_1(N)]$ then $f=0$. Using condition (i), we apply this result to the modular form $F(h_1,h_2,h_3)\in S_8(N)$ to prove that $F(h_1(q),h_2(q),h_3(q))=0$. Now, using (ii) we are going to prove that  $\psi_F(h_1,h_2,h_3)= c_F$. In order to prove it, we first observe that $h'_1h_3 - h_1h'_3\in S_6(N)$, since $h_1, h_3\in S_2(N)$. On the other hand, the modular form $\frac{\partial F}{\partial Y}(h_1,h_2,h_3)$ belongs to $S_6(N)$ since $\frac{\partial F}{\partial Y}(X,Y,Z)$ is a homogeneous polynomial of degree $3$ . Then $G=h'_1h_3 - h_1h'_3- c_F\frac{\partial F}{\partial Y}(h_1,h_2,h_3)\in S_6(N)$. Therefore,  $\psi_F(h_1,h_2,h_3)= c_F$ if and only if $G(q)=O(q^{c'_N})$, that is, if and only if (ii) holds. Then the proof of the proposition follows directly from Proposition \ref{nonhyper3}.
\QED

To compute a model for the modular curve $C$ defined over the integers, let $\left\{h_1,h_2,h_3 \right\}$ be a basis of $S_2(C)$ consisting of cusp forms with integral $q$-expansions. Let us consider an enumeration
$$
\{f_1,\dots, f_{15} \}=\{h_1^{i} h_2^{j}h_3^{k} \in S_8(N)\,|\,i,j,k\in\Z_{\ge 0},\, i+j+k=4 \}
$$ of the set of degree four monomials in $h_1,h_2,h_3,$ and let $f_i(q)=\sum_{j\geq 1} b_{ij} q^j$ be the $q$-expansions of the $f_i$.  The smooth plane quartic $F$ defining the modular curve $C$, i.e. with $F(h_1,h_2,h_3)=0,$ is given by the cusp form $F=\sum_{i=1}^{15} a_i f_i = \sum_{j\geq 1}\left( \sum_{i=1}^{15} b_{ij} a_i\right) q^j \in S_8(N),$ whose defining coefficients $a_i$ are computed by solving the linear equation $B^{T} \cdot a=0, $ where $a=(a_i)$ is a non-trivial vector with $15$ entries, $B=(b_{ij})$ a matrix with $15$ rows and at least $c_N$ columns.

In the following example we are going to explain how this method works in practice.
\begin{example}{\rm 
Let $A$ be the new modular abelian $3$-fold $A_{243E}$. The vector space $S_2(A)$ is generated by
the $q$-integral basis $\left\{h_1,h_2,h_3 \right\},$ where
\begin{small}
\begin{eqnarray*}
h_1(q) &=& q - 3 q^5 - 2 q^7 - 3 q^8 - 2 q^{10} + q^{13} + q^{16} - 3 q^{17} + 2 q^{19} + O(q^{20})\, , \\
h_2(q) &=& q^2 - q^5 - 3 q^7 - 4 q^8 - 3 q^{10} + 4 q^{11} + 3 q^{13} + 2 q^{14} + 3 q^{16} + 6 q^{19} + O(q^{20})\, , \\
h_3(q) &=&  q^4 - 2 q^7 - 3 q^8 - q^{10} + 3 q^{11} + q^{13} + 3 q^{14} + 3 q^{16} + 3 q^{19} + O(q^{20}) \, .
\end{eqnarray*}
\end{small}\noindent By Proposition \ref{compualgo} we compute the $q$-expansions of $h_1,h_2,h_3$ with $c_{203}=180$ coefficients and then an equation of the modular curve $C_{243}^E$ is
\begin{small}
$$
C_{243}^E \,:\,    X^3 Z - 3 X^2 Z^2 - X Y^3 + 9 X Y Z^2 - 6 X Z^3       + 2 Y^3 Z - 9 Y^2 Z^2 + 9 Y Z^3 - 2 Z^4         =0 \, .
$$
\end{small}
\noindent Even if we don't need  all the $180$ coefficients to solve the linear system, we will need them for verifying $F(h_1,h_2,h_3)=0$.  Since $\psi_F(h_1,h_2,h_3)=1$, the jacobian $\jac{C_{243}^E}$ is $\Q$-isogenous to $A$. \QED
}\end{example}

\subsection{Automorphisms of new modular curves}\label{auto_new}

Let $C$ be a new modular non-hyperelliptic curve of genus $3$ and level $N$. The proof of Lemma \ref{Lemmaalgo} provides a full description of the splitting behaviour (over $\Q$) of $\jac{C}$. In the case that $C$ could not be parametrized by $X_0(N)$, we have that  $\jac{C}\Qisog A_g\times A_f$ where $f$ and $g$ are newforms of level $N$ for which the Nebentypus of $f$ is trivial and the Nebentypus of $g$ is of order $1,2,3,4$ or $6$. In this section we prove that the Nebentypus of $g$ can never be $6$. This result is important since it reduces the complexity for the computation of all new modular non-hyperelliptic curves of genus $3$ and for a fixed level $N$.

Let $C$ be a new modular curve of level $N$ and genus $g\geq 2$. The diamond operators $\langle d \rangle$  on $X_1(N)$ induce automorphisms of $C$ over $\Qbar$. Let $\calD$ be the abelian subgroup of ${\rm Aut}_{\Qbar}(C)$ consisting of diamond automorphisms, then $\calD$ is $\gal{\Qbar/\Q}$-stable. Moreover, there exists a surjective morphism $X_0(N)\twoheadlongrightarrow C$ if and only if $\calD$ is the trivial group.  

Let $\calD'$ be a  subgroup of $\calD$. If the curve $C' = C / \calD'$ has genus $g'$, then $g-g'$ is even (see \cite[Lemma 6.17]{BakGonPoo}). In particular, if $C$ is a new modular curve of genus $3$ and $\calD' \neq \{1\}$, then $g'=1$. Therefore, if $C$ is non-hyperelliptic, Proposition \ref{quotg1} gives us the following result:

\begin{lemma}
Let $C$ be a new modular non-hyperelliptic curve of genus $3$. Then $\calD$ is either trivial or cyclic of order $2,3$ or $4$.
\end{lemma}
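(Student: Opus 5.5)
The plan is to apply Proposition \ref{quotg1} directly to $G=\calD$; the only real work is checking that its hypotheses hold. If $\calD$ is trivial there is nothing to prove, so assume $\calD\neq\{1\}$.

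First, $\calD$ is an abelian subgroup of ${\rm Aut}_{\Qbar}(C)$. The diamond operators $\langle d\rangle$, for $d\in(\Z/N\Z)^\ast$, form an abelian group acting on $X_1(N)$. Because $C$ is new, the map $\pi$ is compatible with this action, and so $d\mapsto$ (induced automorphism) gives a group homomorphism $(\Z/N\Z)^\ast\to{\rm Aut}_{\Qbar}(C)$. Its image $\calD$ is therefore a quotient of an abelian group, hence abelian. It is also finite, since $\aut{C}$ is finite for genus $3$.

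Second, I compute the genus of $C/\calD$. The quoted result \cite[Lemma 6.17]{BakGonPoo} says that $g-g'$ is even for every subgroup $\calD'\le\calD$, where $g'$ is the genus of $C/\calD'$. Apply it with $\calD'=\calD$ and $g=3$; this forces $g'\in\{1,3\}$. The value $g'=3$ is impossible. By Hurwitz's formula, $2g-2=|\calD|(2g'-2)+\deg R$ with $\deg R\ge 0$, so $g'=g$ would give $4\ge 4|\calD|$, i.e. $|\calD|=1$, contradicting $\calD\neq\{1\}$. Hence $C/\calD$ has genus $1$.

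Third, $C$ is non-hyperelliptic of genus $3$ by hypothesis. Proposition \ref{quotg1}, applied over $\Qbar$ with $G=\calD$, now gives that $\calD$ is cyclic of order $2$, $3$ or $4$.

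I see no real obstacle. The only point needing care is that \cite[Lemma 6.17]{BakGonPoo} is applied to the whole group $\calD$, together with the observation that a nontrivial group action strictly lowers the genus, which excludes $g'=3$.
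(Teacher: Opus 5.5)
Your proposal is correct and follows the paper's own argument. Like the paper, you use \cite[Lemma 6.17]{BakGonPoo} to force $C/\calD$ to have genus $1$ when $\calD\neq\{1\}$, and then apply Proposition~\ref{quotg1} to the abelian group $\calD$; your Hurwitz argument excluding $g'=3$ and your check that $\calD$ is abelian just make explicit two points the paper leaves implicit.
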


\begin{corollary}
Let $C$ be a new modular non-hyperelliptic genus $3$ curve of level $N$ such that $\jac{C}$ is not a quotient of $J_0(N)$. Then $\jac{C}\Qisog A_f \times A_g $, where $f$ is a newform of level $N$ with trivial Nebentypus and $g$ is a newform of level $N$ with Nebentypus of order $2,3$ or $4$. 
\end{corollary}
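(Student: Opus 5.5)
We combine the structural description from the proof of Lemma \ref{Lemmaalgo} with the Lemma just proved on $\calD$. Since $\jac{C}$ is not a quotient of $J_0(N)$, there is no surjection $X_0(N)\twoheadlongrightarrow C$, so $\calD$ is non-trivial. By the preceding Lemma, $\calD$ is then cyclic of order $2$, $3$ or $4$. This bounds the diamond group; the remaining work is to transfer that bound to the Nebentypus.

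First we pin down the shape of $\jac{C}$. Recall the three cases \textbf{A}, \textbf{AE}, \textbf{EEE} from the proof of Lemma \ref{Lemmaalgo}. In cases \textbf{A} and \textbf{EEE} every newform involved has trivial Nebentypus. In these cases the diamond operators act trivially on $S_2(C)$, hence on $H^0(C,\Omega^1)$. An automorphism of a curve of genus $\ge 2$ acting trivially on holomorphic differentials is trivial (the canonical map is an embedding since $C$ is non-hyperelliptic), so $\calD$ would be trivial, a contradiction. Hence we are in case \textbf{AE}: $\jac{C}\Qisog A_f\times A_g$. Here, after renaming to match the statement, $f\in S_2(N,1)$ gives the elliptic factor and $g\in S_2(N,\varepsilon)$ gives the $\Q$-simple $2$-fold, with $\ord(\varepsilon)\in\{1,2,3,4,6\}$. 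By the same argument, $\varepsilon$ is non-trivial.

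Next we relate $\ord(\varepsilon)$ to $|\calD|$. The diamond operator $\langle d\rangle$ acts on $S_2(C)=\langle f, g, {}^\sigma g\rangle$ diagonally with eigenvalues $1,\varepsilon(d),{}^\sigma\varepsilon(d)$. Since $C$ is non-hyperelliptic, the map $\aut{C}\to {\rm GL}(H^0(C,\Omega^1))$ is injective. Therefore $\calD$ is isomorphic to the image of $(\Z/N\Z)^*$ under $d\mapsto {\rm diag}(1,\varepsilon(d),{}^\sigma\varepsilon(d))$. This image is isomorphic to the image of $\varepsilon$, a cyclic group of order $\ord(\varepsilon)$. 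Thus $\ord(\varepsilon)=|\calD|\in\{2,3,4\}$, which excludes $6$.

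The main obstacle is justifying cleanly that $\calD$ acts on $H^0(C,\Omega^1)$ exactly through the diamond action on $S_2(C)$. This requires that each $\langle d\rangle$ descends to $C$ compatibly with $\pi$, so that $\pi^*$ intertwines the two actions. We take this as given by the definition of $\calD$ in the text, using the identification $\pi^*H^0(C,\Omega^1)=S_2(C)\,dq/q$.
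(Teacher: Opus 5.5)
Your proof is correct and follows essentially the paper's route. The paper leaves the argument implicit: the Lemma on $\calD$ (from Proposition \ref{quotg1}) combined with the case analysis in the proof of Lemma \ref{Lemmaalgo}. You make explicit the tacit step that $|\calD|={\rm ord}(\varepsilon)$, using the faithfulness of the action of $\calD$ on $H^0(C,\Omega^1)\cong S_2(C)$, and this is exactly what rules out order $6$.
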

\section{Non-new non-hyperelliptic modular curves of genus $3$}\label{section_non_new}

If  $C$ is a non-new non-hyperelliptic modular curve of genus $3$, Proposition \ref{nonhyper3} could still be used to compute a rational equation of $C$. The following examples present some behaviour that could appear when dealing with non-new non-hyperelliptic modular curves of genus $3$.

\begin{example}{\rm \label{nichtneu}
Let $f$ and $g$ be the newforms attached to the modular abelian varieties  $A_{178C}$ and  $E_{89A}$ respectively. Their $q$-expansions begin as follow:
$$
\begin{array}{l}
f(q)= q - q^2 + aq^3 + q^4 + (-2a - 3)q^5  - a q^6 - 2 q^7 - q^8 + O(q^{9}),\\
g(q)	=  q - q^2 - q^3 - q^4 - q^5 + q^6 - 4 q^7 + 3 q^8 - 2 q^9 + q^{10} + O(q^{11})  ,
\end{array}
$$
where $K_{f}=\Q(a) $, $a^2 + 2a - 1 =0$, $\aut{K_f}=\{{\rm id},\sigma\}$ and $K_{g}=\Q$. Let $\{ f_1,f_2\}$ be the $\Q$-basis of $S_2(A_{178C})$ with the following $q$-expansion: 
$$
\begin{array}{l}
f_1(q)= q - q^2 + q^4 - 3 q^5 - 2 q^7 - q^8 - 2 q^9 + 3 q^{10} - 2 q^{13} + 2 q^{14} + O(q^{15})\,,\\
f_2(q)=    q^3 - 2 q^5 - q^6 - 2 q^9 + 2 q^{10} + 2 q^{11} + q^{12} + q^{15} + 2 q^{17} + 2 q^{18} +  O(q^{19})\,.\\
\end{array}
$$
Let be $f_3(q)=g(q)+2g(q^2)$. Then the non-singular irreducible homogeneous polynomial 
$$ 
F(X,Y,Z)\!=\!X^3 Z -X^2Y^2 + 2XY^2Z - Y^3Z - 4X^2Z^2 + 3XYZ^2 -  2Y^2 Z^2 + 3 X Z^3 - Y Z^3
$$
satisfies $F(f_{1},f_{2},f_3)=0$ and $\psi_F(f_{1},f_{2},f_3)=1$. Therefore the smooth plane quartic $C^{89A}_{178C}:F(X,Y,Z)=0$ is a non-hyperelliptic and non-new modular curve of genus $3$ such that $S_2(C^{89A}_{178C})=\langle f_1,f_2,f_3\rangle $. In particular, $\jac{C^{89A}_{178C}}$ is $\Q$-isogenous to $A_{178C}\times E_{89A}$.
\QED}\end{example}

The following couple of examples show  (plane) non-hyperelliptic curves of genus $3$ with equations  $C_d: F_d(X,Y,Z)=0$ of degree $d\geq 5$.

\begin{example}{\rm \label{degree7}
Let $A$ be the new modular abelian $3$-fold $A_{178D}$.  With respect to the integral basis $\{f_{1},f_{2},f_{3}\}$ of $S_2 (A)$ 
$$
\begin{array}{l}
f_{1}(q)\! =\!    q + q^2 + q^4 + q^8 + 3q^9\!+ 2q^{11}\!- 6q^{15}\!+ q^{16}\!- 2q^{17}\!+ 3q^{18}\! - 4q^{19}\!+ O(q^{20})\,,\\
f_{2}(q) \!=\!   q^3 - q^5 + q^6 - q^9 - q^{10} + q^{12} - 2 q^{13} + q^{15} + q^{17}\!- q^{18}\!+ q^{19}\!+ O(q^{20})\,,\\
f_{3}(q)\!= \!  q^7 - 2 q^9 - q^{13} + q^{14} + 2 q^{15} + 2 q^{17} - 2 q^{18} +\!O(q^{20})\,,
\end{array}
$$
we compute the equation of a genus $3$ non-hyperelliptic curve $C :  F_7(X,Y,Z)\!=\!0$, where
$$
\begin{array}{l}
F_7(X,\!Y,\!Z)\!=\!X^5\!Z^2\!- 3X^4Y\!Z^2\! + 8X^4\!Z^3\! - \!2X^3Y^3\!Z\! +\!7X^3Y^2\!Z^2\! - 23X^3Y\!Z^3\!- \!Y^7\!      \\
    \qquad \qquad \quad\!+26X^3Z^4\! - 3X^2Y^3Z^2\! + 18X^2Y^2Z^3\! - 53X^2YZ^4\! + 42X^2Z^5\! + XY^6  \\
    \qquad  \qquad\quad \!- 3XY^5Z\! +\!XY^4Z^2\!+\!14XY^3Z^3\!- \!10XY^2Z^4\!- \!36XYZ^5\!+ 32XZ^6  \\
    \qquad  \qquad   \quad\!+ 4Y^6Z\!+ 10Y^5Z^2\!-\!66Y^4Z^3\!+\!124Y^3Z^4\!-\!100Y^2Z^5\!+ 20YZ^6\!+ 8Z^7 ,\\
\end{array}
$$
for which $F_7(f_1,f_2,f_3)=0$. A canonical embedding $\phi$ of $C$ is computed using {\sc Magma}, and $C'=\phi (C)$ has a quartic model with equation
$$
\begin{array}{l}
C'\,\,: \,\,X^4 - 4X^3Y + 6X^3Z + 2X^2Y^2 - 5X^2Z^2 + 4XY^3 - 30XY^2Z  \\
    \qquad \quad+ 64XYZ^2 - 42XZ^3 - 3Y^4 + 8Y^3Z + 9Y^2Z^2 - 40YZ^3 + 28Z^4=0\,. 
\end{array}
$$
We have been able to check $\jac{C'}\stackrel{\mathbb{F}_p}{\sim}\jac{C^{89A}_{178C}}$ from Example \ref{nichtneu} for $p\nmid 178$ such that $p<500$. Furthermore, by computing the absolute Dixmier-Ohno invariants (cf. \cite{Dixmier,GirardKohel}), we have $C'$ is $\Qbar$-isomorphic to $C^{89A}_{178C}$. 

The abelian variety $A_{178D}$ is $\Qbar$-simple (since $f_{178D}$ has no extra-twists and has no complex multiplication), therefore it is not $\Qbar$-isogenous to $\jac{C'}$. 
\QED}\end{example}

If  $A$ is a new modular abelian variety $A_f$ and we build a smooth plane quartic $C'$ as above, then $\jac{C'}$ and $A_f$ will in general not be $\Qbar$-isogenous. However, if $\jac{C'}$ is  $\Q$-simple and new of level $N$, then there exists a newform $g$ of level $N$ with $\jac{C'}\Qisog  A_g$. Moreover, if $g = f \otimes \chi $ for some Dirichlet-character $\chi$ then $A_f \stackrel{K}{\sim} \jac{C'}$, where $K=\Qbar^{{\rm ker}\,\chi}$ (cf. \cite{Shimura73}).

\begin{example}{\rm \label{Modul243}
Let $A$ be the new modular abelian $3$-fold $A_{243F}$. Then there exists an integral basis $\{f_1 ,f_2, f_3\} $ of $S_2(A)$ satisfying $F_6(f_1,f_2,f_3)=0,$ where 
$$
\begin{array}{l}
F_6(X,Y,Z) \!=\! X^5Z - 7X^4Z^2 - X^3Y^3 - 9X^3YZ^2 + X^3Z^3 + 6X^2Y^3Z\\
   \qquad \qquad \qquad  + 19X^2Z^4 - 3XY^3Z^2 + 18XY^2Z^3 + 27XYZ^4 + 2XZ^5  \\
   \qquad \qquad \qquad   + 27X^2YZ^3 + 9X^2Y^2Z^2 + 8Y^3Z^3 + 9Y^2Z^4 - 9YZ^5 - 8Z^6\, ,
\end{array}
$$
 defines a non-hyperelliptic plane curve $C\, : \, F_6(X,Y,Z)=0$ of genus $3$ and degree $d=6$.  A canonical embedding of $C$ is a smooth plane quartic $C'$ given by
$$
\begin{array}{l}
C'\,\,: \,\,  X^3Y  - 12X^2Y^2 + 9X^2YZ - 24X^2Z^2 + 48XY^3 + 24XY^2Z -57XYZ^2\\
  \qquad \quad  - 2X^3Z + 66XZ^3 - 64Y^4 + 104Y^3Z - 36Y^2Z^2 - 65YZ^3 + 88Z^4 =0\, .
\end{array}
$$
 We have $f_{243E}=f_{243F}\otimes\chi$ where $\chi$ is the non-trivial Dirichlet-character of level $3$, and thus $A_{243E}\stackrel{\Q(\sqrt{-3})}{\sim}A_{243F}$. In fact, the quartic $C'$ is modular and its jacobian ${\rm Jac}(C')$ is conjecturely $\Q$-isogenous to $ A_{243E}\stackrel{\Q}{\sim}\jac{C_{243}^E}$ (we have checked   $\jac{C'}\stackrel{\mathbb{F}_p}{\sim} A_{243E}$ for primes $p$ with $3<p<500$).\QED
}\end{example}


The next example illustrates in particular why condition (iii)(b) in Proposition \ref{nonhyper3} is necessary.
\begin{example}{\rm \label{C120}
Let $A_1$ be the new modular abelian variety of level $120$ that is the product of $A_{{120A}_{\{ 0, 0, 0, 2 \}}}$ and $E_{120A}$  and let $C_1$ be the smooth plane quartic defined by $F(X,Y,Z)=0$, where 
$$
F(X,Y,Z)\!=\!3\,X^4 - 10\,X^2\,Y^2 - 10\,X^2\,Z^2 + 7\,Y^4 + 8\,Y^3\,Z + 2\,Y^2\,Z^2 - 8\,Y\,Z^3 + 7\,Z^4.
$$
There exists a basis $\{f_1,f_2,f_3\}$ of $S_2(A_1)$ such that $F(f_1,f_2,f_3)=0$ and $\psi_F(f_1,f_2,f_3)$ is non-constant.

Similarly, consider the new modular abelian three fold $A_2=A_{{240B}_{\{ 0, 0, 0, 2 \}}} \times E_{240A}$ and the smooth plane quartic $C_2:G(X,Y,Z)=0$, where
$$
G(X,Y,Z)\!=\!3\,X^4 - 10\,X^2\,Y^2 - 10\,X^2\,Z^2 + 7\,Y^4 - 8\,Y^3\,Z + 2\,Y^2\,Z^2 + 8\,Y\,Z^3 + 7\,Z^4.
$$
There exists a basis $\{g_1,g_2,g_3\}$ of $S_2(A_2)$ such that $G(g_1,g_2,g_3)=0$ and $\psi_G(g_1,g_2,g_3)$ is non-constant.

Furthermore, $F(X,-Y,Z)=G(X,Y,Z)$, and hence $C_1\stackrel{\Q}{\simeq}C_2$.  However  $A_1$ is not  $\Q$-isogenous to $A_2$. In fact, if we denote by $\chi_1$ (resp. $\chi_2$) the Dirichlet character attached to the quadratic  field $\Q(\sqrt{-5})$ (resp. $\Q(\sqrt{-1})$), the following equalities hold
$$
f_{{120A}_{\{ 0, 0, 0, 2 \}}}= f_{{240B}_{\{ 0, 0, 0, 2 \}}} \otimes \chi_1 \quad\mbox{and}\quad f_{120A}= f_{240A}\otimes \chi_2.
$$
Therefore
$$
A_{{120A}_{\{ 0, 0, 0, 2 \}}} \stackrel{\Q(\sqrt{-5})}{\sim} A_{{240B}_{\{ 0, 0, 0, 2 \}}}\quad\mbox{and}\quad E_{120A} \stackrel{\Q(\sqrt{-1})}{\sim} E_{240A}.
$$
Furthermore, $\jac{C_i}$ is not $\Qbar$-isogenous to the modular $3$-fold $A_i$ for $i=1,2$. Nevertheless, $C_1$ (and $C_2$) must be modular for some level $M$ dividing $120$. Indeed, $C_1$ is $\Q$-isomorphic to a non-new modular curve $C$ of level $30$ with $\jac{C}\Qisog A_{{30A}_{\{ 0, 2 \}}} \times E_{15A}$. More precisely, there exists a $\Q$-basis $\{h_1,h_2\}$ for  $S_2(A_{{30A}_{\{ 0, 2 \}}})$ as well as a $\Q$-basis $\{h_3\}$ for $S_2(E_{15A})$ and a homogeneous polynomial  
$$
H(X,Y,Z)= 7 X^4 + 8 X^3 Y + 2 X^2 Y^2 - 10 X^2 Z^2 - 8 X Y^3 + 7 Y^4 - 10 Y^2 Z^2 + 3 Z^4
$$
such that  if we denote by $r_3(q)=h_3(q)+2 h_3(q^2)$ we have  $H(h_1,h_2,r_3) = 0$ and $\psi_{H} (h_1,h_2,r_3)=1$, in particular $S_2(C)=\langle h_1,h_2,r_3\rangle$. In fact, $F(Z,Y,-X)=H(X,Y,Z)$. \QED
}\end{example}


\section{Conclusion}

In this work, we present a method to compute equations for modular non-hyperelliptic curves of genus $3$. In particular, given a modular abelian $3$-fold $A$ of level $N$, we provide a criterion that enables us to check (from a basis of $S_2(A)$) whether there is a modular non-hyperelliptic curve $C$ of the same level for which $\jac{C} \Qisog A.$ We apply this method to compute a  list of $44$ new modular non-hyperelliptic curves of genus $3$ and level $N$ smaller than a certain large bound (see Appendix).  In view of the computed examples and the result (and guess) in the hyperelliptic case  \cite{GonGon, BakGonPoo}, we think it is likely that the computed list is complete. However, we are currently unable to prove this guess. It seems also impossible to prove it nowadays using the techniques proposed in \cite{GonGon} for the genus two case.


We also stress the following open question:

\emph{For every fixed genus $g\geq 2$, is the number of modular curves (without the restriction  new) of genus $g$ infinite?}

The authors of \cite{BakGonPoo} conjectured that the answer is {\it yes}.

As first step to answer the question above, Section \ref{section_non_new} explained by means of some specific example the difference that may appear between new and non-new modular curves of genus $3$.\\[1cm]

{\bf Acknowledgements:} The first author thanks the Centre of Applied Cryptographic  Research \!(CACR) of the University of Waterloo  for the facilities during his stay. The second author thanks the Universidad Aut{\'o}noma de Madrid for their hospitality during the time when this work was initiated. Further, he thanks the Centre of Applied Cryptographic  Research (CACR) of the University of Waterloo for providing a stimulating research environment. Both authors would like to thank Josep Gonz\'alez, Gabino Gonz\'alez-Diez, Adolfo Quir\'os and Christophe Ritzenthaler  for very useful comments. We thank also Francesc Bars for suggesting references. Finally, we thanks the anonymous referee for useful suggestions.

\section{Appendix}

\subsection*{Labelling} 

We remind the notation introduced in \cite[Appendix]{BakGonPoo} to label newforms of weight $2$, and in particular to fix an ordering of (Galois conjugacy classes of) newforms having a given level and Nebentypus. 

Let $f$ be a newform of level $N$ and Nebentypus $\varepsilon$, we associate to $f$ a label of the form $NX_\varepsilon$, where $X$ is a letter or string in $\{A,B,\dots,Z,AA,BB,\dots\}$. If $\varepsilon=1$, we omit the subscript $\varepsilon$ and use a label of the form $NX$, and if the Fourier coefficients of $f$ are integers
we will use the labeling in~\cite{cremona97}. First we remember how is $X$ constructed from $f$; Fix $N$ and $\varepsilon$. Let $f = \sum a_n q^n \in  S_2(N,\varepsilon)$ be a newform. To $f$ associate the infinite sequence of integers ${\mathbf t}_f = ( \mbox{Tr}_{K_f/\Q} a_1, \mbox{Tr}_{K_f/\Q} a_2, \dots)$. Choose $X \in \{A,B,\dots,Z,AA,BB,\dots\}$ according to the position of ${\mathbf t}_f$ in the set $\{\, {\mathbf t}_g : g \in S_2(N,\varepsilon)\,\mbox{newform} \,\}$ sorted in increasing dictionary order. Notice that ${\mathbf t}_f$ determines the Galois conjugacy class of $f$. 

Now, we describe how the Dirichlet character $\varepsilon\colon (\Z/N\Z)^*\rightarrow \C^*$ is encoded. Let $N=\prod{p_n^{\alpha_n}}$ be the prime-ordered factorization. Then there exist unique $\varepsilon_{p_n}\colon (\Z/p_n^{\alpha_n}\,\Z)^*\rightarrow \C^*$ such that $\varepsilon=\prod{\varepsilon_{p_n}}$. If $p$ is an odd prime, let $g_p$ be the smallest positive integer that generates $(\Z/p^{\alpha}\,\Z)^*$, and if $p=2$ and $\alpha \le 2$, let $g_p=-1$; in these cases $\varepsilon_{p}$ is determined by the integer $e_p \in [0,\varphi(p^\alpha))$ such that $\varepsilon_p(g_p)=e^{2\pi i e_p/\varphi(p^\alpha)}$.
If $p=2$ and $\alpha > 2$, then $\varepsilon_2$ is determined by the integers $e_2',e_2'' \in [0,\varphi(2^\alpha))$ such that $\varepsilon_2(-1)=e^{2\pi i e_2'/\varphi(2^\alpha)}$ and $\varepsilon_2(5)=e^{2\pi i e_2''/\varphi(2^\alpha)}$,
and we write $e_2 = \{e_2',e_2''\}$.  Assuming that $N$ is implicit, we denote $\varepsilon$ by $\{e_p\,:\,p|N\}$.

If $f\in S_2(N,\varepsilon)$ is a newform with label $NX_{\varepsilon}$, then $A_{NX_{\varepsilon}}$ will denote the corresponding modular abelian variety $A_f$, except that when $\dim A_f=1$, we instead follow the labeling in~\cite{cremonaweb} and use the letter $E$ instead of $A$ to denote the modular elliptic curve $A_f$.

\subsection*{Tables} 
Performing the computations of all the non-hyperelliptic new modular curves of genus $3$ would be extremely time-consuming. Therefore we have conducted a search of all non-hyperelliptic new modular curves of genus $3$ and some fixed level. For the case {\bf A} up to level $N\leq 10000$, for the case {\bf AE} up to level $N\leq 4000$ and for the case {\bf EEE} up to level $N\leq 130000$, the highest level in Cremona's tables \cite{cremonaweb} (see proof of Lemma \ref{Lemmaalgo} for the notation of cases {\bf A},   {\bf AE} and {\bf EEE}). For this aim, we have implemented the method developed at Section \ref{section3} in {\sc Magma} \cite{Magma} using W.~A.~Stein's Modular packages. We have obtained a total of $44$ such curves that appear at Table \ref{tableg3}.

Table \ref{tableg3} has two columns. The first one shows the label of the non-hyperelliptic new modular curve of genus $3$ and the second column shows the correponding smooth plane quartic model over $\Z$. The notation for the curves is as follows:
 
{\bf Notation. }{\rm  Let $C$ be a new modular non-hyperelliptic genus $3$ curve of level $N$. In the case that $\jac{C}$ is a factor of $J_0(N)$ we will add $N$ as a subscript and the corresponding letters of the labels corresponding to the $\Q$-factors of   $\jac{C}$ as superscripts to $C$. Otherwise,  $\jac{C}\Qisog  A_f\times E$, where $f\in S_2(N,\varepsilon)$ such that $\varepsilon$ is not trivial and $E$ is an elliptic curve over $\Q$. We will denote this modular curve by $C^{X_E}_{{N\,X_A}_{\varepsilon}}$ where ${X_{A}}_\varepsilon$ and $X_E$ are the corresponding labels for $A_f$ and $E$ respectively.\\
}

Finally, we have added the superscript $\blacklozenge$  (resp.  $\bigstar$) on the left of the labelling of the curve if  $P_\infty$ is an ordinary flex (resp.  hyperflex).

Note that we have not attempted to reduce the size of the coefficients appearing in the computed models. However the models obtained have already very small coefficients: the worst-case is the modular curve $C_{65}^{A,B},$ which has largest coefficient $98$. 

Contrary to what is observed in \cite{BakGonPoo} for hyperelliptic new modular curves of genus $3,$ there exists a non-hyperelliptic new modular curve of genus $3$ and level $N$ for which $N$ has more than two different odd prime divisors, namely the modular curve $C_{855}^H$. In fact, this is the only curve for which three different primes appear in the factorization of the level of modularity. In all other cases, there are just one or two primes.

As a final remark, note that the curve $C^A_{{49,A}_{\{ 14 \}}}$ is the Klein quartic, it is thus $\Q$-isomorphic to the classical modular curve $X(7)$.
{\small
\setlongtables
\begin{longtable}{Z}
\caption[New modular non-hyperelliptic genus $3$ curves]{New modular non-hyperelliptic genus $3$ curves}\label{tableg3}\\
\toprule
 $C$ & $\,\,\,\,\,\,F(x,y,z)\,\,=\,0 $\\
\midrule
\endfirsthead
\toprule
 $C$ & $\,\,\,\,\,\,F(x,y,z)\,\,=\,0 $\\
\midrule
\endhead
\midrule
\endfoot
\bottomrule   
\endlastfoot
$C^A_{{20,A}_{\{ 1, 1 \}}}$ & $x^3z - x^2y^2 - 3x^2z^2 + xy^3 + 4xz^3 - 2z^4=0$ \\
$C^A_{{24,A}_{\{ 0, 1, 0 \}}}$ & $x^3 z - x^2 y^2 - x^2 z^2 + x y^3 - x y^2 z - 3 x y z^2 + y^3 z + 2 y^2 z^2 + y z^3=0$ \\
${}^{\blacklozenge}C^A_{{24,A}_{\{ 1, 1, 1 \}}}$ & $x^3 z - 2 x^2 y z - x^2 z^2 - x y^3 + 2 x y^2 z + 6 x y z^2 + 2 y^3 z - 
2 y^2 z^2 - 4 y z^3=0$ \\
${}^{\blacklozenge}C^A_{{36,A}_{\{ 1, 3 \}}}$ & $x^3 z - 3 x^2 z^2 - x y^3 + 4 x z^3 + 2 y^3 z - 2 z^4=0$ \\
${}^{\bigstar}C^A_{{39,A}_{\{ 0, 6 \}}}$ & $x^3 z - 2 x^2 z^2 + 4 x y^2 z - 7 x y z^2 - 6 x z^3 - y^4 + 5 y^3 z + 2 y^2 z^2 - 6 y z^3 - 3 z^4=0$
\\
${}^{\blacklozenge}C^A_{{39,A}_{\{ 0, 4 \}}}$ & $x^3 z - 2 x^2 y z - x y^3 - 2 x y^2 z + 2 x y z^2 + y z^3=0$ \\
$C^{A,B}_{43}$ & $2 x^3 z - 2 x^2 y^2 - 6 x^2 z^2 + x y^3 + 9 x y^2 z - 5 x y z^2 + 11 x z^3 - 9 y^4 + 12 y^3 z - 22 y^2 z^2 + 12 y z^3 -9 z^4=0$\\
${}^{\blacklozenge}C^A_{{45,A}_{\{ 2, 0 \}}}$ & $x^3 z + 2 x^2 y z - x y^3 + 2 x y^2 z - 2 x y z^2 + y z^3=0$ \\
${}^{\blacklozenge}C^A_{{49,A}_{\{ 14 \}}}$ & $x^3 z - x y^3 + y z^3=0$ \\
${}^{\blacklozenge}C^A_{{56,A}_{\{ 0, 1, 0 \}}}$ & $x^3 z + 2 x^2 y z - x^2 z^2 - x y^3 - 2 x y^2 z - 6 x y z^2 + 2 y^3 z + 2 y^2 z^2 + 4 y z^3=0$ \\
$C^{A,B,C}_{57}$ & $2 x^3 z - 2 x^2 y^2 + 5 x^2 z^2 - 16 x y^2 z - 8 x y z^2 + 2 x z^3 + 3 y^4 + 8 y^3 z - 6 y^2 z^2 - 4 y z^3=0$\\
$C^{A,B}_{65}$ & $2 x^3 z - 2 x^2 y^2 - 7 x^2 z^2 - 2 x y^3 - 4 x y^2 z + 26 x y z^2 + 30 x z^3 - 3 y^4 - 26 y^3 z - 81 y^2 z^2 - 98 y z^3 - 40 z^4=0$\\
$C^{A,C}_{65}$ & $6 x^3 z - 6 x^2 y^2 - 8 x^2 z^2 - 3 x y^3 + 25 x y^2 z - 13 x y z^2 + 25 x z^3 -11 y^4 + 19 y^3 z - 33 y^2 z^2 + 13 y z^3 - 14 z^4=0$\\
$C^{A,B}_{82}$ & $x^3 z - x^2 y^2 - 2 x^2 z^2 + 4 x y^2 z + 3 x y z^2 + y^3 z - 2 y z^3=0$\\
$C^{A,C}_{91}$ & $x^3 z - x^2 y^2 - x^2 z^2 + x y^3 - x y^2 z + 3 x y z^2 - x z^3 - 2 y^4 + 4 y^3 z - 6 y^2 z^2 + 4 y z^3 - z^4=0$\\
$C^A_{97}$ & $x^3 z - x^2 y^2 - 5 x^2 z^2 + x y^3 + x y^2 z + 3 x y z^2 + 6 x z^3 - 3 y^2 z^2 - y z^3 - 2 z^4=0$\\
${}^{\bigstar}C^{A,B,D}_{99}$ & $x^3 z - 3 x^2 z^2 + 3 x y^2 z - 3 x y z^2 + 9 x z^3 - y^4 - 6 y^2 z^2 + y z^3 - 8 z^4=0$\\
${}^{\blacklozenge}C^B_{109}$ & $x^3 z - 2 x^2 y z - x^2 z^2 - x y^3 + 6 x y^2 z - 6 x y z^2 + 3 x z^3 + y^4 -6 y^3 z + 10 y^2 z^2 - 5 y z^3=0$\\
$C^C_{113}$ & $x^3 z - x^2 y^2 - 4 x^2 z^2 + x y^3 + 2 x y^2 z + 6 x z^3 - y^3 z - 3 y^2 z^2 + y z^3 - 3 z^4=0$\\
$C^{A,B,C}_{118}$ & $x^3 z - x^2 y^2 - x^2 z^2 + 2 x y^2 z + x y z^2 + x z^3 + y^3 z + y^2 z^2 + y z^3 + z^4=0$\\
$C^{B,C}_{123}$ & $x^3 z - x^2 y^2 + x^2 z^2 - x y^3 - 2 x y^2 z + x z^3 - y^4 - y^3 z - y^2 z^2=0$\\
$C^A_{127}$ & $x^3 z - x^2 y^2 - 3 x^2 z^2 + x y^3 - x y z^2 + 4 x z^3 + 2 y^3 z - 3 y^2 z^2 + 3 y z^3 - 2 z^4=0$\\
$C^B_{139}$ & $x^3 z - x^2 y^2 - 2 x^2 z^2 + x y^3 - 2 x y^2 z + 2 x y z^2 + x z^3 + y^4 -2 y^3 z + 4 y^2 z^2 - 3 y z^3=0$\\
$C^{C,D,E}_{141}$ & $x^3 z - x^2 y^2 + x^2 z^2 - x y^3 + x y^2 z + x z^3 - y^4 - y^3 z - y^2 z^2=0$\\
$C^A_{149}$ & $x^3 z - x^2 y^2 - 3 x^2 z^2 + x y^3 + 3 x y^2 z - 2 x y z^2 + 2 x z^3 - y^4 - y^2 z^2 + y z^3=0$\\
${}^{\blacklozenge}C^A_{151}$ & $x^3 z - 2 x^2 y z - 2 x^2 z^2 - x y^3 + 2 x y^2 z + 4 x y z^2 + x z^3 + y^2 z^2 - 3 y z^3 - 2 z^4=0$\\
$C^B_{169}$ & $x^3 z - x^2 y^2 - 3 x^2 z^2 + x y^3 + 2 x y z^2 + x z^3 + y^2 z^2 - 3 y z^3 + z^4=0$\\
${}^{\blacklozenge}C^B_{179}$ & $x^3 z - 2 x^2 y z - 2 x^2 z^2 - x y^3 + 2 x y^2 z + x y z^2 + 2 x z^3 + y^2 z^2 - y z^3 - z^4=0$\\
$C^E_{187}$ & $x^3 z - x^2 y^2 - x^2 z^2 + x y^3 - x y^2 z - x y z^2 + 2 x z^3 + y^3 z - y^2 z^2 + 3 y z^3=0$\\
$C^F_{203}$ & $x^3 z - x^2 y^2 - 3 x^2 z^2 + x y^3 + 3 x y^2 z - 4 x y z^2 + 4 x z^3 - y^4 + 3 y^3 z - 6 y^2 z^2 + 3 y z^3 - 2 z^4=0$\\
$C^B_{217}$ 
& $3 x^3 z - 3 x^2 y^2 - 11 x^2 z^2 - 3 x y^3 + 13 x y^2 z - 2 x y z^2 + 11 x z^3 -2 y^4 - y^3 z - 4 y^2 z^2 + y z^3 - 2 z^4=0$\\
$C^A_{239}$ & $x^3 z - x^2 y^2 - x^2 z^2 + x y^3 - x y^2 z + x z^3 + y^4 - y^3 z + y z^3 - z^4=0$\\
${}^{\blacklozenge}C^E_{243}$ & $ x^3 z - 3 x^2 z^2 - x y^3 + 9 x y z^2 - 6 x z^3 + 2 y^3 z - 9 y^2 z^2 + 9 y z^3 - 2 z^4=0$\\
${}^{\blacklozenge}C^{A,D}_{243}$ & $x^3 z - x y^3 + 6 x z^3 - 4 y^3 z + 7 z^4 =0$\\
$C^B_{295}$ 
& $x^3 z - x^2 y^2 - x^2 z^2 + x y^3 - x y^2 z + 2 x y z^2 - x z^3 - y^3 z + 3 y^2 z^2 - y z^3=0$\\
$C^D_{329}$ 
& $x^3 z - x^2 y^2 + x y^3 + x y z^2 + x z^3 - y^3 z + 2 y z^3 + z^4 =0$\\
${}^{\blacklozenge}C^D_{369}$ & $x^3 z - 2 x^2 z^2 - x y^3 + 6 x y z^2 - 6 x z^3 - 3 y^2 z^2 + 6 y z^3 - z^4=0$\\
${}^{\blacklozenge}C^{B,K}_{459}$ 
& $x^3 z - x^2 z^2 - x y^3 + 5 x y z^2 - x z^3 + y^4 + 2 y^3 z - y^2 z^2 - 2 y z^3=0$\\
$C^D_{475}$ 
& $x^3 z - x^2 y^2 - 5 x^2 z^2 - x y^3 + x y^2 z + 17 x y z^2 + 14 x z^3 - 2 y^4 - 14 y^3 z - 35 y^2 z^2 - 35 y z^3 - 12 z^4=0$\\
${}^{\blacklozenge}C^H_{855}$ & $x^3 z - x^2 z^2 - x y^3 + 3 x y z^2 - 3 x z^3 + 2 y^3 z - 3 y^2 z^2 + 3 y z^3=0$\\
$C^E_{1175}$ & $ xy^3-x^2y^2 + y^4 + x^3z - 2xy^2z - 2y^3z + x^2z^2 + 2xyz^2 + y^2z^2 - xz^3 + yz^3=0$\\
${}^{\blacklozenge}C^O_{1215}$ 
& $x^3 z - x y^3 + 3 x y z^2 + 5 x z^3 - 6 y^2 z^2 - 3 y z^3 + z^4=0$\\
${}^{\blacklozenge}C^{A,L}_{1215}$ 
& $x^3 z - x y^3 + 3 x y z^2 + 5 x z^3 + 3 y^2 z^2 + 6 y z^3 - 8 z^4 =0$\\
${}^{\bigstar}C^{C,D,E}_{1539}$ & $x^3 z - 3 x^2 z^2 + 3 x y^2 z - 3 x y z^2 + 3 x z^3 - y^4 - 2 y^2 z^2 + y z^3 + 2 z^4=0$\\
\end{longtable}
}


\begin{thebibliography}{1}

\bibitem{accola_70}
R.~Accola.
\newblock Two {T}heorems on {R}iemann {S}urfaces with {N}oncyclic
  {A}utomorphism {G}roups.
\newblock {\em Proc. Amer. Math. Soc.}, 25(3):598--602, 1970.

\bibitem{BakGonPoo}
M.~H. Baker, E.~Gonz{\'a}lez-Jim{\'e}nez, J.~Gonz{\'a}lez, and B.~Poonen.
\newblock Finiteness results for modular curves of genus at least 2.
\newblock {\em Amer. J. Math.}, 127(6):1325--1387, 2005.

\bibitem{Magma}
W.~Bosma, J.~Cannon, and C.~Playoust.
\newblock The {M}agma algebra system. {I}. {T}he user language.
\newblock {\em J. Symbolic Comput.}, 24(3-4):235--265, 1997.
\newblock Computational algebra and number theory (London, 1993). Available on
  \url{http://magma.maths.usyd.edu.au/magma/}.

\bibitem{MR1839918}
C.~Breuil, B.~Conrad, F.~Diamond, and R.~Taylor.
\newblock On the modularity of elliptic curves over {$\Q$}: wild 3-adic
  exercises.
\newblock {\em J. Amer. Math. Soc.}, 14(4):843--939, 2001.

\bibitem{brieskorn} 
E. ~Brieskorn, H.~Kn\"orrer.
\newblock Plane algebraic curves.
\newblock Birkh\"auser, Basel 1986.

\bibitem{cremona97}
J.~E. Cremona.
\newblock Algorithms for modular elliptic curves.
\newblock Cambridge University Press, Cambridge, second edition, 1997.

\bibitem{cremonaweb}
J.~E. Cremona.
\newblock Elliptic curve data.
\newblock Available on \url{http://www.warwick.ac.uk/~masgaj/ftp/data/}, 2006.

\bibitem{Dixmier}
J.~Dixmier.
\newblock {O}n the projective {I}nvariants of quartic plane curves.
\newblock {\em Advances in Math.}, 64:279--304, 1987.

\bibitem{farkas_kra}
H.~M. Farkas and I.~Kra.
\newblock {\em Riemann surfaces}, volume~71 of {\em Graduate Texts in
  Mathematics}.
\newblock Springer-Verlag, New York, second edition, 1992.

\bibitem{MR1692024}
M.~Furumoto and Y.~Hasegawa.
\newblock Hyperelliptic quotients of modular curves {$X\sb 0(N)$}.
\newblock {\em Tokyo J. Math.}, 22(1):105--125, 1999.


\bibitem{GirardKohel}
M.~Girard and D.R. Kohel.
\newblock Classification of genus 3 curves in special strata of the moduli space.
\newblock In {\em Algorithmic Number Theory Symposium - ANTS VIII)}, volume 4076 of {\em Lecture Notes in Comput. Sci.}, pages 346--360. Springer, Berlin, 2006.


\bibitem{J_Gon}
J.~Gonz{\'a}lez.
\newblock {E}quations of hyperelliptic modular curves.
\newblock {\em Ann. Inst. Fourier (Grenoble)}, 41(4):779--795, 1991.

\bibitem{GonGon}
E.~Gonz{\'a}lez-Jim{\'e}nez and J.~Gonz{\'a}lez.
\newblock {M}odular curves of genus 2.
\newblock {\em Math. comp.}, 72:397--418, 2003.

\bibitem{MR1472817}
Y.~Hasegawa.
\newblock Hyperelliptic modular curves {$X\sp *\sb 0(N)$}.
\newblock {\em Acta Arith.}, 81(4):369--385, 1997.

\bibitem{hasegawa:hashimoto96}
Y.~Hasegawa and K.~Hashimoto.
\newblock Hyperelliptic modular curves ${X}\sp *\sb 0({N})$ with square-free
  levels.
\newblock {\em Acta Arith.}, 77(2):179--193, 1996.

\bibitem{komiya_kuri}
A.~Kuribayashi and K.~Komiya.
\newblock On {W}eierstrass points and automorphisms of curves of genus three.
\newblock In {\em Algebraic geometry (Proc. Summer Meeting, Univ. Copenhagen,
  Copenhagen, 1978)}, volume 732 of {\em Lecture Notes in Math.}, pages
  253--299. Springer, Berlin, 1979.

\bibitem{murabayashi92}
N.~Murabayashi.
\newblock On normal forms of modular curves of genus $2$.
\newblock {\em Osaka J. Math.}, 29(2):405--418, 1992.

\bibitem{oyono}
R.~Oyono.
\newblock Non-hyperelliptic modular Jacobians of dimension 3.
\newblock {\em Math. comp.}, 78: 1173--1191, 2009.

\bibitem{Shimura71}
G.~Shimura.
\newblock On elliptic curves with complex multiplication as factors of the Jacobians of modular function fields.
\newblock {\em Nagoya Math. J.}, 43, 199--208, 1971. 

\bibitem{Shimura73}
G.~Shimura.
\newblock On the factors of the jacobian variety of a modular function field.
\newblock {\em J. Math. Soc. Japan}, 25(3):523--544, 1973.

\bibitem{shimura-taniyama}
G.~Shimura and Y.~Taniyama.
\newblock  Complex Multiplication of Abelian Varieties and its Applications to Number Theory.
\newblock  Vol. 6 (Pub. Japan Math. Soc., 1961).

\bibitem{MShimura95}
M.~Shimura.
\newblock {D}efining {E}quations of {M}odular {C}urves {$X_0 (N)$}.
\newblock {\em Tokyo J. Math.}, 18(2):443--456, 1995.

\bibitem{TWiles}
A.~R. Taylor and A.~Wiles.
\newblock {R}ing-theoretic properties of certain {H}ecke algebras.
\newblock {\em Ann. of Math. (2)}, 141(3):553--572, 1995.

\bibitem{Wiles}
A.~Wiles.
\newblock {M}odular elliptic curves and {F}ermat's last theorem.
\newblock {\em Ann. of Math. (2)}, 141(3):443--551, 1995.

\end{thebibliography}
\end{document}